\def\arXiv#1{\href{http://arxiv.org/abs/#1}{arXiv:#1}}
\def\?[#1]{\textbf{[#1]}\marginpar{\Large{\textbf{??}}}}
\def\smallsection#1{\smallskip\noindent\textbf{#1}.}
\let\epsilon=\varepsilon 
\newtheorem{theo}{Theorem}
\newtheorem{prop}{Proposition}[section]	
\newtheorem{defi}[prop]{Definition}
\newtheorem{ass}{Assumption}
\newtheorem{lemm}[prop]{Lemma}
\newtheorem{corr}[prop]{Corollary}
\newtheorem{rem}{Remark}
\newtheorem{disc}{Discussion}
\numberwithin{equation}{section}
\let\Re=\Real
\def\indic{\operatorname{1\hskip-2.75pt\relax l}}
\newcommand\reallywidehat[1]{\arraycolsep=0pt\relax%
\begin{array}{c}
\stretchto{
  \scaleto{
    \scalerel*[\widthof{\ensuremath{#1}}]{\kern-.5pt\bigwedge\kern-.5pt}
    {\rule[-\textheight/2]{1ex}{\textheight}} 
  }{\textheight} %
}{0.5ex}\\           
#1\\                 
\rule{-1ex}{0ex}
\end{array}
}
\title[Model Order Reduction for Delay Equations]{Model Order Reduction for (Stochastic-) Delay Equations With Error Bounds}
\author{Simon Becker and Lorenz Richter}
\email{simon.becker@damtp.cam.ac.uk}
\email{lorenz.richter@fu-berlin.de}
\address{DAMTP, University of Cambridge, Wilberforce Road, Cambridge CB3 0WA, UK}
\address{Institute of Mathematics, Freie Universität Berlin, 14195 Berlin, Germany}
\address{Institute of Mathematics, BTU Cottbus-Senftenberg, 03046 Cottbus, Germany}
\begin{document}

\begin{abstract}
We analyze a structure-preserving model order reduction technique for delay and stochastic delay equations based on the balanced truncation method and provide a system theoretic interpretation.
Transferring the framework of \cite{BH18}, we find error estimates for the difference between the dynamics of the full and reduced model. This analysis also yields new error bounds for bilinear systems and stochastic systems with multiplicative noise and non-zero initial states.
\end{abstract}

\maketitle


\section{Introduction}

In this article we study a delay-structure preserving model order reduction method, first discussed for finite-dimensional bilinear delay systems in \cite{GDBA}, based on the bilinear balanced truncation technique, for deterministic delay systems of the following types\footnote{our analysis immediately extends to $ N \varphi^{\operatorname{del}}(t-\tau)$ replaced by the sum $\sum_{i=1}^n N_i \varphi^{\operatorname{del}}(t-\tau_i).$}
 \begin{subequations}
 \label{eq:standardform}
\begin{eqnarray}
&\label{eq:delay1}\varphi^{\operatorname{del}'}(t) &= A \varphi^{\operatorname{del}}(t) +  N \varphi^{\operatorname{del}}(t-\tau)+ Bu(t), \text{ for } t \in (0,T)\\
&\varphi^{\operatorname{del}}(0)&=\varphi_0, \  \varphi^{\operatorname{del}}(t)=f(t) \text{ for } t \in (-\tau,0)  \quad \text{ and } \nonumber \\
&\label{eq:delay2}\varphi^{\operatorname{bild}'}(t) &= A \varphi^{\operatorname{bild}}(t) +  N \varphi^{\operatorname{bild}}(t-\tau)v(t)+ Bu(t), \text{ for }t \in (0,T)\\
&\varphi^{\operatorname{del}}(0)&=\varphi_0, \  \varphi^{\operatorname{del}}(t)=f(t) \text{ for } t \in (-\tau,0).\nonumber
\end{eqnarray}
\end{subequations}
on arbitrary (separable) Hilbert spaces $K$ for time-dependent control functions $u,v \in L^2((0,\infty))$ and a delay parameter $\tau>0$. In particular, for zero delay, $\tau=0$, \eqref{eq:delay2} reduces to the form of a standard \emph{bilinear system} and thus our error bound provides also a new error bound for the important class of bilinear systems with non-zero initial conditions, extending the analysis of \cite{BH18}. We also discuss different types of discrete and continuous delay, cf. \eqref{eq:typdelay}.

\medskip

Furthermore, we adapt our analysis to stochastic differential equations with delay (SDDE) on finite-dimensional system spaces $K \simeq \mathbb R^d$
\begin{equation}
\begin{split}
\label{eq:sdde}
dX_t &= (A X_t +Bu(t)) \ dt  + \sum_{i=1}^k N_i X_{t-\tau_i}\ dW_t^i, \qquad Y_t =CX_t \\
X_0&=\xi, \ X_t=f_t \text{ for } t \in (-\tau,0),
\end{split}
\end{equation}
where $(W_t^{i})_{i=1,\dots,k}$ are i.i.d.\@ copies of standard Brownian motion.

\medskip

\emph{Balanced truncation} is a well-established model order reduction technique, especially for linear (\eqref{eq:delay1} with $N=0$ \cite{CGP,BHRR,RS}), bilinear (\eqref{eq:delay1} with $\tau=0$ and $v=u$ \cite{BH18,R1,ZL,BD}), and stochastic differential equations with multiplicative noise (\eqref{eq:sdde} with $\tau_i=0$ \cite{BR15,BH18}). For bilinear with multiplicative noise, error bounds have only been obtained for zero initial condition so far - which we aim to overcome with this article, too. For stochastic systems with multiplicative noise, this has been only addressed in the recent work \cite{BHRR}. The bilinear balanced truncation method identifies, as has been rigorously proven at least for linear systems, a subspace of jointly reachable and controllable states and aims to preserve these subspaces well under model order reduction \cite{CGP,BD,BR15,R1}. The two properties are approximately captured by positive-definite operators that are called \emph{Gramians}. The dominant eigenspaces of the product of two Gramians are then used to define the reduced order model. 
The method we consider is inspired by linear, bilinear, and stochastic balanced truncation theory and preserves the delay-structure of the original system. 

In this article we study delay equations \eqref{eq:delay1}, \eqref{eq:delay2} using bilinear balanced truncation, and the SDDE \eqref{eq:sdde} using stochastic balanced truncation. We show that the Gramians still have a system theoretic interpretation and also derive error bounds for the reduced order model. 

We also emphasize that balanced truncation is a method particularly designed for dissipative systems. For all balanced truncation methods, one therefore assumes that the operator $A$ generates an exponentially stable and strongly continuous semigroup. Moreover, \eqref{eq:delay1} is less well-adapted to the dissipative structure, since -- without further assumptions on $N$ -- the delay term can create growing modes in the dynamics. This is prevented in \eqref{eq:delay2} by assuming that the control $v$ is \emph{small}.

To take non-zero initial states into account, we consider for equations \eqref{eq:standardform} and \eqref{eq:sdde} a space $Y:=\operatorname{span}\left\{\varphi_i; \ i \in \{1,\dots,N\} \right\}$ of admissible input states, where $(\varphi_i)$ is an orthonormal system in $K$ and introduce a map $B_{\text{in}}u = \sum_{i=1}^n u_i \varphi_i$ that we include in the model order reduction.

Before stating the main results of this article, let us mention other model order reduction techniques for delay systems. By rewriting the delay equation as a linear equation, using the head-tail representation \cite{CZ} on an infinite-dimensional space, a reduction method based on linear balanced truncation theory has been proposed in \cite{JDM12}. Apart from that article, applying the method of balanced truncation to delay equations, other model order reduction techniques such as rational approximation methods \cite{19,20}, interpolation methods \cite{4}, Krylov space methods \cite{10}, moment matching based methods \cite{24,SA} have been proposed.
\medskip 

Many of the above methods, however, do not fully preserve the delay structure, which is fixed by the method proposed in this article.

\subsection{Outline of the article}
\begin{itemize}
\item In Section \ref{sec:BTinNut} we discuss the algorithmic aspects of the balanced truncation method that we propose for delay systems. 

\item Section \ref{sec:DDE} provides an overview over the $C_0$-semigroup approach to deterministic delay equations. 

\item In Section \ref{sec:BBTFDS}, we introduce the balanced truncation theory framework including the central objects of the theory, i.e. the bilinear Gramians and the Hankel operator for the deterministic equations. We then show in Propositions \ref{homsys} and \ref{homsys2} that the bilinear Gramians preserve the structure of the equation also for delay equations.

\item The proof of the error bounds for bilinear and deterministic delay equations with non-zero initial data, stated in Theorem \ref{theo:err}, is given in Section \ref{sec:Proof1}.

\item In Section \ref{sec:SDDE} we then treat the stochastic delay systems. 

\item Finally, in Section \ref{sec:examples}, we consider delay models from physics including Stuart-Landau oscillators and generalized Langevin equations, and apply the balanced truncation method for delay systems.

\end{itemize}
\subsection{Main results}

To state the error estimates, let $\left\lVert N \right\rVert, \left\lVert C \right\rVert, $ and $\left\lVert B \right\rVert$ denote the maximum of the respective norm for the full and reduced system and $M,\omega>0$ such that for both systems the corresponding semigroup satisfies $\left\lVert T(t) \right\rVert \le Me^{-\omega t}.$ We also introduce the norm
\[\Vert u \Vert_{L^{2 \vee \infty}} :=\operatorname{max}\left\{\left\lVert u \right\rVert_{L^{\infty}((0,t),\mathbb R^n)}, \left\lVert u \right\rVert_{L^{2}((0,t),\mathbb R^n)} \right\}.\]

Our main error estimate for deterministic systems is stated in the following theorem and applies to bilinear systems as well:

\begin{theo}[Error bound deterministic systems]
\label{theo:err}
Let $\mathcal{H}\simeq \mathbb{R}^m$ be the output space and consider the difference of two solutions to \eqref{eq:delay2}. We assume that both solutions satisfy the stability condition $M\left\lVert N \right\rVert /\sqrt{2\omega}<1$ and $\Vert v \Vert_{L^2(0,T)} \le 1$, such that the Volterra series converges \cite[Lemma $A.1$]{BH18}. Then, for control functions $u,v \in H^1((0,T),\mathbb R)$, initial states $\varphi_0 = \sum_{i=1}^k \langle \textbf{w} , \widehat{e_i} \rangle \phi_i$ and $\widetilde{\varphi}_0:= \sum_{i=1}^k \langle  \textbf{w} , \widehat{e_i} \rangle \widetilde{\phi}_i$, and zero history function, it follows for $H^{\operatorname{bil}}$ being the Hankel operator associated to the system that $\Delta(C\varphi^{\operatorname{bild}})$, which is the difference of the output of the full and reduced system, satisfies  
\begin{equation}
\begin{split}
\label{eq:erresm2}
\left\lVert \Delta(C\varphi^{\operatorname{bild}}) \right\rVert_{L^2((0,\infty), \mathbb R^m)} &\le 4   \left\lVert \Delta(H^{\operatorname{bil}})\right\rVert_{\operatorname{TC}} \Bigg(\left\lVert \varphi_0 \right\rVert_{K} \operatorname{max} \left\{ 1, \left\lVert v \right\rVert_{L^{\infty}(0,T)} \right\} \\
&\qquad +\operatorname{max} \left\{\Vert u \Vert_{L^{2}(0,T)}, \left\lVert v \right\rVert_{L^1(0,T)} \right\}\left\lVert u \right\rVert_{L^{\infty}(0,T)} \Bigg).
\end{split}
\end{equation}
\end{theo}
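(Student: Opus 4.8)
We sketch the proof of Theorem~\ref{theo:err}; the details are carried out in Section~\ref{sec:Proof1}.

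\textbf{Step 1: Volterra expansion and splitting.} The plan is to transfer the Volterra-series argument of \cite{BH18} to the delay setting. By \cite[Lemma~A.1]{BH18}, the stability assumption $M\lVert N\rVert/\sqrt{2\omega}<1$ together with $\lVert v\rVert_{L^2(0,T)}\le 1$ makes the solution of \eqref{eq:delay2} equal to the convergent series $\varphi^{\operatorname{bild}}=\sum_{n\ge 0}\varphi_n$, where $\varphi_0(t)=T(t)\varphi_0+\int_0^t T(t-s)Bu(s)\,ds$ and $\varphi_{n+1}(t)=\int_0^t T(t-s)\,N\,\varphi_n(s-\tau)\,v(s)\,ds$, with the zero history function entering through $\varphi_n(s-\tau)=0$ for $s<\tau$. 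First I would split $\varphi_n=\varphi_n^{0}+\varphi_n^{u}$ according to whether the innermost factor of the iterated integral is $T(\cdot)\varphi_0$ or involves $Bu$, so that the output error $\Delta(C\varphi^{\operatorname{bild}})$ splits into an initial-state-driven part and a control-driven part. The reduced solution admits the analogous expansion $\sum_n\widetilde{\varphi}_n$, built from the reduced data and from $\widetilde{\varphi}_0$; this is legitimate precisely because Propositions~\ref{homsys} and \ref{homsys2} ensure that the bilinear Gramians, and hence the reduced realization, again have the delay form \eqref{eq:delay2}. Since the initial state is fed in through the auxiliary input operator $B_{\operatorname{in}}$, it acts as an additional control channel throughout, and because $(\phi_i)$ is orthonormal one has $\lVert\varphi_0\rVert_K=\lVert(\langle\mathbf{w},\widehat{e_i}\rangle)_i\rVert_{\ell^2}$ with the same coefficient vector for the reduced initial state.

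\textbf{Step 2: Telescoping through the Hankel operator.} For fixed $n$, the term $C\varphi_n-\widetilde{C}\,\widetilde{\varphi}_n$ is multilinear in the semigroups, in $N$, and in $B$ (or in $\varphi_0$); writing it as a sum of $n+1$ pieces, each obtained by replacing exactly one factor by the difference between the full and the reduced data, one checks — exactly along the lines of \cite{BH18} — that every piece is a pairing of an observability-type operator with a reachability-type operator through the \emph{difference} of the (truncated) Hankel operator, hence is dominated by $\lVert\Delta(H^{\operatorname{bil}})\rVert_{\operatorname{TC}}$ times a product of the remaining factors. Using $\lVert T(t)\rVert\le Me^{-\omega t}$ and the Cauchy--Schwarz / Young inequalities for the iterated time integrals, those remaining factors reduce to: one factor $\lVert u\rVert_{L^\infty(0,T)}$, which is where the $H^1$-regularity assumed on the controls is exploited (it places the output in $L^2((0,\infty))$ and yields the pointwise bound on $u$, as in \cite{BH18}); either $\lVert\varphi_0\rVert_K$ for the $\varphi_n^{0}$-terms or $\lVert u\rVert_{L^2(0,T)}$ for the $\varphi_n^{u}$-terms; and one factor of $\lVert v\rVert$ for every delay application in between, estimated by $\lVert v\rVert_{L^\infty(0,T)}$ when it multiplies a bounded-interval integral and by $\lVert v\rVert_{L^1(0,T)}$ when the accompanying time integral is absorbed into it.

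\textbf{Step 3: Summation.} It remains to sum over $n$. After the convolution estimates each delay application contributes a factor bounded by $M\lVert N\rVert\,\lVert v\rVert_{L^2(0,T)}/\sqrt{2\omega}<1$, so the double sum over $n$ and over the $n+1$ telescoping pieces at level $n$ converges geometrically; collecting the constants gives the prefactor $4$, with the initial-state part summing to $4\lVert\Delta(H^{\operatorname{bil}})\rVert_{\operatorname{TC}}\lVert\varphi_0\rVert_K\max\{1,\lVert v\rVert_{L^\infty(0,T)}\}$ and the control part to $4\lVert\Delta(H^{\operatorname{bil}})\rVert_{\operatorname{TC}}\max\{\lVert u\rVert_{L^2(0,T)},\lVert v\rVert_{L^1(0,T)}\}\,\lVert u\rVert_{L^\infty(0,T)}$, which is \eqref{eq:erresm2}.

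\textbf{Main obstacle.} The principal difficulty is the delay shift $s\mapsto s-\tau$: it turns the iterated integrals into shifted rather than genuine convolutions, so neither the convergence of the Volterra series nor — more importantly — the identification of the telescoping ``joining spot'' with the Hankel operator can be quoted directly from \cite{BH18}; both must be re-established, and one has to check that the shift is compatible with the delay-structured factorization of the Gramians from Propositions~\ref{homsys} and \ref{homsys2}. The remaining work is bookkeeping: tracking which of the mixed norms — $\lVert u\rVert_{L^{2\vee\infty}}$ on one side, $\lVert v\rVert_{L^1(0,T)}$ and $\lVert v\rVert_{L^\infty(0,T)}$ on the other — dominates each term, and verifying that the combinatorial constants collapse to $4$.
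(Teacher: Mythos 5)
Your overall architecture matches the paper's proof: split the output error into a control-driven part and an initial-state-driven part (the paper's $K_1$ and $K_2$, defined through \eqref{eq:delay4} and \eqref{eq:delay5}), expand each in the (delay) Volterra series, remove the delay shift by a change of variables so that the delay kernels \eqref{eq:discdel} are dominated by the bilinear kernels \eqref{eq:Volt}, and extract the $\lVert u\rVert_{L^2}$, $\lVert u\rVert_{L^\infty}$, $\lVert v\rVert_{L^1}$, $\lVert v\rVert_{L^\infty}$ factors with Minkowski/H\"older/Young, using $\lVert v\rVert_{L^2}\le 1$. The obstacle you single out (shifted rather than genuine convolutions) is indeed handled exactly by that change of variables, and your Step~1 split is the paper's.

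The genuine gap is in Step~2. Telescoping $C\varphi_n-\widetilde C\widetilde\varphi_n$ into $n+1$ pieces, each containing exactly one single-factor difference (of the type $\Delta(NT(\cdot))$, $\Delta(B)$, $\Delta(CT(\cdot))$) sandwiched between full factors on one side and reduced factors on the other, cannot be bounded by $\lVert\Delta(H^{\operatorname{bil}})\rVert_{\operatorname{TC}}$: such mixed pieces are realization-dependent, whereas the Hankel operator is not. For two different state-space realizations of the same input-output map one has $\Delta(H^{\operatorname{bil}})=0$ while the telescoped pieces are generally nonzero, so no estimate of the claimed form can hold; the Hankel difference only controls differences of whole input-output kernels. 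The paper therefore never telescopes: it keeps the full kernel differences $\Delta(h_k)=h_k-\widetilde h_k$ (which are precisely the Volterra kernels of the error system \eqref{eq:compsys}, whose Hankel operator is $\Delta(H^{\operatorname{bil}})$), bounds the output error by mixed $L^1_iL^2$ norms of these differences, and then invokes \cite[Lemma $4.2$]{BH18} together with \eqref{eq:estmate}, where each sum $\sum_i\lVert\Delta(W_iF_i)\rVert_{\operatorname{TC}}$, $\sum_i\lVert\Delta(W_iG_i)\rVert_{\operatorname{TC}}$, etc.\ is bounded by $2\lVert\Delta(H^{\operatorname{bil}})\rVert_{\operatorname{TC}}$. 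In particular the prefactor $4$ comes from adding two such sums, not, as in your Step~3, from summing a geometric series; the stability condition $M\lVert N\rVert/\sqrt{2\omega}<1$ only guarantees summability of the kernel norms. A minor further point: the reduced model having the same delay structure is by construction of the truncation in Section~\ref{sec:BTinNut}; Propositions~\ref{homsys} and \ref{homsys2} provide the system-theoretic interpretation of the Gramians and are not what licenses the Volterra expansion of the reduced solution.
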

The proof of this result is given in Section \ref{sec:Proof1}.

\medskip
The corresponding error bound for stochastic (delay) differential equations with multiplicative noise is stated in the following theorem: 
\begin{theo}[SDDE error bound]
\label{theo:sdde}
For control functions $u \in L^{\infty}_{\omega}L^2_t(\Omega_{(0,T)},\mathbb R^n)$, zero history functions, i.e. $f_t\equiv 0$, and two solutions to \eqref{eq:sdde} with initial conditions $\xi := \sum_{i=1}^k \langle \textbf{v} , \widehat{e_i} \rangle \xi_i$ with $L^2(\Omega,\mathcal F_0,K)$ orthonormal system $(\xi_i),$ and $\widetilde{\xi}:= \sum_{i=1}^k \langle  \textbf{v} , \widehat{e_i} \rangle \widetilde{\xi}_i,$ it follows that for two independent Wiener processes, where both solutions satisfy $M\left\lVert N \right\rVert /\sqrt{2\omega}<1,$ and  $H^{\operatorname{sdde}}$ being the Hankel operator associated to the system
\begin{equation*}
\begin{split}
&\left\lVert \Delta \left(CX^{\operatorname{sdde}}\right) \right\rVert_{L^2(\Omega_{(0,T)},\mathbb R^m)} \ \le  \left\lVert \Delta(H^{\operatorname{sdde}}) \right\rVert_{\operatorname{TC}} \left(\left\lVert \xi \right\rVert_{L^{2}(\Omega;K)} + 2 \left\lVert u \right\rVert_{L^{\infty}_{\omega}L^2_t(\Omega_{(0,T)},\mathbb R^n)} \right) 
\end{split}
\end{equation*}
\end{theo}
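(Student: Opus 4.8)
The plan is to mirror the argument behind Theorem~\ref{theo:err} (Section~\ref{sec:Proof1}), replacing the deterministic Volterra expansion of the bilinear delay equation \eqref{eq:delay2} by the Wiener chaos expansion of the mild solution of \eqref{eq:sdde}, and the deterministic control $v$ of that theorem by the Brownian increments $dW^i$. Concretely, I would write the mild solution as a chaos series $X^{\operatorname{sdde}}_t=\sum_{j\ge 0}X^{(j)}_t$, where the zeroth term $X^{(0)}_t=T(t)\xi+\int_0^t T(t-s)Bu(s)\,ds$ carries the initial state (fed in through the augmentation map of the introduction) and the control, and each higher term $X^{(j)}_t$ is an iterated stochastic integral against the $W^i$ built from $T(\cdot)$, the $N_i$, and the lower-order terms, with the zero history function $f_t\equiv 0$ ensuring that the delayed arguments falling in $(-\tau,0)$ drop out. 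The stability condition $M\lVert N\rVert/\sqrt{2\omega}<1$, together with It\^o's isometry and the stochastic analogue of \cite[Lemma~A.1]{BH18}, guarantees $L^2(\Omega)$-convergence of this series uniformly on $[0,T]$ and gives geometric control of the tails, exactly as the Volterra series converges in the deterministic case.

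Next, I would compute the output difference $\Delta(CX^{\operatorname{sdde}})$ chaos level by chaos level. Because $\xi=\sum_{i=1}^k\langle\mathbf v,\widehat{e_i}\rangle\xi_i$ and $\widetilde\xi$ share the same coordinate vector $\mathbf v$ in the balanced basis, the difference of the two expansions reorganises into a telescoping sum whose summands each factor through the difference of the reachability and observability maps of the system, i.e.\ through $\Delta(H^{\operatorname{sdde}})$, so that the trace-class norm dominates each multilinear term. This is precisely where running the full and the reduced system on two independent Wiener processes is used: the cross-covariances $\mathbb{E}[X^{(j)}_t\,\widetilde X^{(l)}_t]$ vanish for $j,l\ge 1$ after conditioning on $\mathcal F_0$ (an It\^o integral against an independent noise has conditional mean zero), and likewise the cross terms with the zeroth-order, $\mathcal F_0$-measurable part vanish. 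Hence $\lVert\Delta(CX^{\operatorname{sdde}})\rVert_{L^2(\Omega_{(0,T)})}^2$ splits cleanly into an initial-state channel and a control channel, each of which is handled by the factorisation of the Hankel operator through the stochastic Gramians introduced in Section~\ref{sec:SDDE}.

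It then remains to estimate the two channels. Bounding the balanced coordinates of $\xi$ by $\lVert\xi\rVert_{L^2(\Omega;K)}$ produces the term $\lVert\Delta(H^{\operatorname{sdde}})\rVert_{\operatorname{TC}}\,\lVert\xi\rVert_{L^2(\Omega;K)}$; for the control, It\^o's isometry converts the stochastic integrals against $u$ into the $L^2_t$-norm, the outer $L^\infty_\omega$ pulls the $\omega$-dependence out, and the geometric series in $M\lVert N\rVert/\sqrt{2\omega}$ together with the two halves of the telescope yields the factor $2\,\lVert u\rVert_{L^\infty_\omega L^2_t}$; summing the two channels gives the claimed bound. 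The main obstacle is the step identifying the telescoping difference with $\Delta(H^{\operatorname{sdde}})$: one has to check that the stochastic Hankel operator of Section~\ref{sec:SDDE} genuinely acts on the augmented input consisting of the $\xi$-coordinates together with $u$ (the non-zero initial state is not covered by \cite{BH18}), that the delay shifts of the integration variables and the vanishing history contributions do not break this factorisation, and that the resulting multilinear bounds are summable over the chaos level uniformly; this last summability, under $M\lVert N\rVert/\sqrt{2\omega}<1$ with the delay in place, is the technical heart of the proof.
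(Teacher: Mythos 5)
Your high-level split into an initial-state channel and a control channel is the right one, but both the route you take to it and, more importantly, the step you leave open are where the proof actually lives. The paper needs no Wiener chaos expansion: since \eqref{eq:sdde} is linear in $X$, the variation-of-constants formula \eqref{eq:VOCF} already gives $X_t^{\operatorname{sdde}}=\Phi_0^{\operatorname{sdde}}(t)(\xi)+\int_0^t\Phi_0^{\operatorname{sdde}}(t,r)Bu(r)\,dr$, and the split is obtained by the triangle inequality, not by orthogonality of chaos levels. Your orthogonality argument is also not what is needed: independence of the two driving noises does not make $\lVert\Delta(CX^{\operatorname{sdde}})\rVert_{L^2}^2$ decompose into the two channels (cross terms between the $\xi$-part and the $u$-part of the same system do not vanish, and higher chaos levels mix $\xi$ and $u$ anyway); independence enters only inside the comparison of the two flows with the error-system Hankel operator, i.e.\ in Lemma \ref{lemmasdde}. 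Moreover, $u$ enters through the drift, so there is no It\^o isometry to apply to the control term: the control channel is a convolution in time and is handled by Young's inequality, producing $\lVert u\rVert_{L^\infty_\omega L^2_t}$ times the $L^1_tL^2_\omega$-norm of $\Delta(\indic_{[0,\infty)}C\Phi^{\operatorname{sdde}}B)$.

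The genuine gap is precisely the step you yourself call the main obstacle: you never establish that the two channel kernels are controlled by $\Delta(H^{\operatorname{sdde}})$. In the paper this is Lemma \ref{lemmasdde}: $\lVert\Delta(C\Phi^{\operatorname{sdde}}B_{\operatorname{in}})\rVert_{L^2(\Omega_{(0,\infty)},\operatorname{HS})}\le\lVert\Delta(H^{\operatorname{sdde}})\rVert_{\operatorname{HS}}$, which is immediate from the definitions of $W^{\operatorname{sdde}}$ and $R^{\operatorname{sdde}}$, and $\lVert\Delta(C\Phi^{\operatorname{sdde}}B)\rVert_{L^1_tL^2_\omega}\le 2\lVert\Delta(H^{\operatorname{sdde}})\rVert_{\operatorname{TC}}$, which follows the argument of \cite[Theorem $3$, (5.11)]{BH18} and is where both the factor $2$ and the independence of the Wiener processes are actually used. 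Your attribution of the factor $2$ to ``two halves of the telescope'' and a geometric series in $M\lVert N\rVert/\sqrt{2\omega}$ is therefore incorrect (the stability condition only guarantees that the Gramians and the flow-based norms are finite), and the telescoping identification of $\Delta(H^{\operatorname{sdde}})$ through chaos levels is neither carried out nor needed. Without a proof of these two kernel estimates the claimed bound does not follow from your outline.
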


The proof of this result is given at the end of Section \ref{sec:SDDE}.
\subsection*{Notation}
The space of bounded linear operators between Banach spaces $X,Y$ is denoted by $\mathcal L(X,Y)$ and just by $\mathcal L(X)$ if $X=Y.$ The operator norm of a bounded operator $T \in \mathcal L(X,Y)$ is written as $\left\lVert T \right\rVert$. The space of trace class operators is denoted by $\operatorname{TC}(X,Y)$ and the space of Hilbert-Schmidt operators by $\operatorname{HS}(X,Y).$
 In particular, we recall that for a linear operator $T \in \operatorname{TC}(X,Y)$, where $X$ and $Y$ are now separable Hilbert spaces, the trace norm is given as
\begin{equation}
\label{tracenorm}
\left\lVert T \right\rVert_{\operatorname{TC}}=\sup \left\{ \sum_{n \in \mathbb{N}} \left\lvert \langle f_n,T e_n  \rangle_Y \right\rvert : (e_n) \text{ ONB of } X \text{ and } (f_n) \text{ ONB of } Y \right\}.
\end{equation}

The resolvent set of an operator $A$ is denoted by $\rho(A)$ and we say that $g=\mathcal O (f)$ if there is a $C>0$ such that $\left\lVert g \right\rVert \le C \left\lVert f \right\rVert.$ In order not to specify the constant $C$, we also write $\left\lVert g \right\rVert \lesssim  \left\lVert f \right\rVert.$
The domain of unbounded operators $A$ is denoted by $D(A).$

Let $\mathnormal H$ be a separable Hilbert space. To define the Hankel operator we require a decomposition of the positive Gramians. For this purpose, we introduce the Fock space $F(\mathnormal H)$ of $\mathnormal H$-valued functions $F(\mathnormal H):=\bigoplus_{k=0}^{\infty}F_k(\mathnormal H)$ where $F_k(\mathnormal H):=L^2((0,\infty)^k,\mathnormal H)$ and $F_0(\mathnormal H):=\mathnormal H.$ Thus, elements of the Fock space are sequences of $F_k$-valued elements.

We introduce function spaces $L^1_iL^2_{k-1}$ and $\mathscr H^{\infty}_i \mathscr H^{2}_{k-1}$ norms which for $\mathnormal H$-valued functions functions $f:(0,\infty)^k \rightarrow  \mathnormal H$ and $g: \mathbb C_{+}^k \rightarrow \mathnormal H$ are defined by
\begin{equation}
\label{eq: mix}
\begin{split}
&\left\lVert f \right\rVert_{L^1_iL^2_{k-1}(\mathnormal H)} = \int_0^{\infty} \left\lVert f(\bullet,\dots,\bullet,s_i, \bullet,\dots,\bullet) \right\rVert_{L^2((0,\infty)^{k-1},\mathnormal H)} \ ds_i \text{ and } \\
&\left\lVert g \right\rVert_{\mathscr H^{\infty}_i \mathscr H^{2}_{k-1}(\mathnormal H)}= \sup_{s_i \in \mathbb C_{+}} \left\lVert g(\bullet,\dots,\bullet,s_i, \bullet,\dots,\bullet) \right\rVert_{\mathscr H^2((0,\infty)^{k-1},\mathnormal H)}. 
\end{split}
\end{equation}
For $k$-variable functions $h$ we sometimes also write $h^{(i)}(s,t):=h(s_1,\dots,s_{i-1},t,s_{i},\dots,s_{k-1})$ in order to shorten the notation. We denote by $H^1$ the Sobolev space of $L^2$ functions whose first weak derivative is in $L^2$ as well.

\section{Balanced truncation for delay systems in a nutshell}
\label{sec:BTinNut}
In this section, we provide a brief overview over the model order reduction method studied in this article and the computability of the error bounds we stated in Theorems \ref{theo:err} and \ref{theo:sdde}.

To obtain the reduced order model for \emph{delay systems on finite-dimensional system spaces} $K$ of the form \eqref{eq:standardform}, or \eqref{eq:sdde} with (possibly) multiple delays, we compute positive (semi-)definite observability $\mathscr O$ and reachability $\mathscr P$ Gramians from the following Lyapunov equations
\begin{equation}
\begin{split}
\label{eq:Lyapunoveq}
&A\mathscr P + \mathscr PA^T + \sum_{i=1}^k N_i \mathscr P N_i^T + BB^T + B_{\operatorname{in}}B_{\operatorname{in}}^T =0 \text{ and }\\
&A^T\mathscr O + \mathscr O A + \sum_{i=1}^k N_i^T \mathscr O N_i + C^TC = 0.
\end{split}
\end{equation}
Since both $\mathscr O$ and $\mathscr P$ are positive semidefinite, they can be decomposed as $\mathscr O = W^*W$ and $\mathscr P =RR^*.$
Let $\mathscr O$ and $\mathscr P$ have both full rank for simplicity, then the \emph{balanced representation} is obtained by performing a singular value decomposition (SVD) $WR = V \Sigma U^*$ and introducing operators $Q:=\Sigma^{-1/2} V^*W$ and $Q^{-1}:=RU\Sigma^{-1/2}$ such that the new balanced matrices are given by
\begin{equation}
\begin{split}
A_b := QAQ^{-1}, \ N_b:=QNQ^{-1}, \ B_b:=QB, \ B_{\operatorname{in} b}:=QB_{\operatorname{in}},  \text{ and }C_b:=CQ^{-1}.
\end{split}
\end{equation}
To obtain a \emph{reduced system}, the smallest singular values of the matrix $\Sigma$ are discarded. 
The error bounds stated in this article are given in terms of the trace distance of the difference of certain Hankel operators $H$, introduced in Def. \ref{HSop}, which we denote by $\Delta(H),$ for the full and reduced system.

To actually compute the singular values of $\Delta(H)$, and thus the trace norm of $\Delta(H)$, one defines an \emph{error system} 
\begin{equation}
\begin{split}
\label{eq:compsys}
&\widehat A:=\left(\begin{matrix} A & 0 \\ 0 & \tilde{A} \end{matrix} \right), \ \widehat B := (B,\tilde{B})^{T}, \ \widehat B_{\operatorname{in}} := (B_{\text{in}},\tilde{B}_{\text{in}})^T,\widehat C:=(C,-\tilde C), \text{ and } \widehat N:=\left(\begin{matrix}N & 0 \\ 0& \tilde N \end{matrix} \right),
\end{split}
\end{equation}
where operators without tilde belong to the original system in \eqref{eq:standardform}, which we call \emph{System $1$}  and operators with tilde correspond to a second system, that we call \emph{System $2$}. This could be any other system with the same structure as the reduced system.
Then one can define Gramians $\widehat{\mathscr O}=\widehat W^{*}\widehat W$ and $\widehat {\mathscr P }=\widehat R \widehat R^{*}$ of this error system \eqref{eq:compsys} that also satisfy Lyapunov equations
\begin{equation}
\begin{split}
&\widehat A \widehat{ \mathscr P} + \widehat{ \mathscr P}\widehat A^T + \sum_{i=1}^k \widehat N_i \widehat{ \mathscr P} \widehat N_i^T + \widehat B\widehat B^T + \widehat B_{\operatorname{in}}\widehat B_{\operatorname{in}}^T =0 \text{ and }\\
&\widehat A^T \widehat{ \mathscr O} + \widehat{ \mathscr O} \widehat A + \sum_{i=1}^k \widehat N_i^T \widehat{ \mathscr O} \widehat N_i + \widehat C^T\widehat C = 0.
\end{split}
\end{equation}
 We can then perform a singular value decomposition $\widehat W\widehat R = \widehat V \Lambda \widehat U^{*}$ with diagonal operator $\Lambda$ that contains all singular values of the error system \eqref{eq:compsys} on its diagonal \cite[Theorem $5.1$]{RS}. Hence, we find $\left\lVert \Delta(H) \right\rVert_{\operatorname{TC}} = \sum_{\lambda \in \Lambda} \lambda,$ as there exist unitary mappings \cite[Prop. $6.1$]{RS} $U: \overline{ \operatorname{ran}}(\widehat W\widehat R) \rightarrow \overline{ \operatorname{ran}}(\widehat H)$ and $V: \operatorname{ker}^{\perp}(\widehat W\widehat R) \rightarrow\operatorname{ker}^{\perp}(\widehat H)$
such that 
\[ \Delta(H) \vert_{\operatorname{ker}^{\perp}(\widehat H)} = \widehat H \vert_{\operatorname{ker}^{\perp}(\widehat H)}=U\left(\widehat W \widehat  R\right) \vert_{\operatorname{ker}^{\perp}(\widehat  W\widehat  R)} V^{*} \vert_{\operatorname{ker}^{\perp}(\widehat H)}. \]

\section{Deterministic Delay equations}
\label{sec:DDE}
For our analysis of delay equations, we start with a $C_0$-semigroup approach to study well-posedness, Volterra kernel expansions, that will be essential in the proof of the error bounds, and stability properties. 
\label{sec:Delay}
\subsection{Delay equations in a semigroup framework}

We start by introducing a \emph{delay operator} $\Phi \in \mathcal L (H^{1}((-r,0),K),K)$, where $H^1$ is the standard Sobolev space. By normalizing appropriately, we can always assume that the history interval, i.e.\@ the interval by which the dynamics reaches back in time, is $(-r,0)$ for some fixed $r>0.$ To fully define the dynamics, we require an initial value $\varphi_0 \in K$ and a history function $ f\in L^2((-r,0),K).$

Then, for a control function $u \in L^2((0,\infty), \mathbb R^n)$ and bounded operators $B \in \mathcal L(\mathbb{R}^n, K)$ of the form $Bu = \sum_{i=1}^n \psi_i u_i$ for some $\psi_i \in K,$ we study delay equations of type \eqref{eq:delay1}
\begin{equation}
\begin{split}
\label{eq:delay}
\varphi^{\operatorname{del}'}(t) &= A\varphi^{\operatorname{del}}(t)+ \Phi\left( \varphi^{\operatorname{del}}\right)(t) + Bu(t), \ t>0 \\
\varphi^{\operatorname{del}}(0)&= \varphi_0, \quad \varphi^{\operatorname{del}}(\sigma)= f(\sigma), \text{ for } \sigma \in (-r,0).
\end{split}
\end{equation} 

We consider two types of delays operators $\Phi: H^{1}((-r,0),K) \rightarrow K$ for our error analysis. 
\begin{ass}[Delay operators] 
\label{ass:delays}
For bounded operators $N \in \mathcal L(K)$, and two types of delays \begin{itemize}
    \item constant time delays $r>\tau>0$ or
    \item integral delays $g \in L^1(-r,0)$ such that $\int_{-r}^0 \vert g(s) \vert \ ds \le 1$,
\end{itemize}
we introduce delay operators $\Phi_d$ and $\Phi_c$ for $f \in H^{1}((-r,0),K)$
\begin{equation}
\label{eq:typdelay}
 \Phi_df  :=  Nf(-\tau) \text{ or } \Phi_cf:= \int_{-r}^0 Nf(s) g(s) \ ds.
 \end{equation}
 \end{ass}
 
\bigskip

Furthermore, we introduce a vectorized control-to-state map 
(\emph{control operator}) $\mathbf{B}$, (\emph{initial-state operator}) $\mathbf{B}_{\operatorname{in}}$, and a state-to output map (\emph{observation operator}) $\mathbf{C}$ such that on $\mathcal K= K \times L^2((-r,0), K)$
\begin{equation}
\begin{split}
\label{eq:definitions}
&\mathbf{B}:=(B,0)  \in \mathcal L(\mathbb R^n,\mathcal K), \quad  \mathbf{B}_{\operatorname{in}}:=(B_{\text{in}},0)    \in \mathcal L(\mathbb R^k,\mathcal K),\\
&\text{ and }\mathbf{C}(\varphi,f):=C\varphi \text{ with }\mathbf{C} \in \mathcal L(\mathcal K,\mathcal H).
\end{split}
\end{equation}

To apply the theory of $C_0$-semigroups, we introduce operators $\mathcal A_0$ and $ \mathcal N$ on $\mathcal K$
\begin{equation}
\label{eq:generator}
 \mathcal A_0:=\left(\begin{matrix} A  & 0 \\ 0 & \frac{d}{d \sigma} \end{matrix} \right) \text{ and } \mathcal N:=\left(\begin{matrix} 0  & \Phi \\ 0 & 0 \end{matrix} \right) \in \mathcal L(D(\mathcal A_0),\mathcal K) 
 \end{equation}
with domain $D(\mathcal A_0):=\left\{ (\varphi,f) \in D(A) \times H^{1}((-r,0), K): f(0)=\varphi \right\} \subset \mathcal K$\footnote{The linear space $D(\mathcal A_0)$ carries the graph norm $\left\lVert x \right\rVert_{D(\mathcal A_0)} = \left\lVert \mathcal A_0 x \right\rVert+ \left\lVert x \right\rVert.$} where $A$ is itself a generator of a strongly continuous semigroup $(T(t))$ on $K$.  
We then define an operator 
\[S_t: K \rightarrow L^2((-r,0),K)\text{ by }(S_t\varphi)(-\tau):=\indic_{[0,\infty]}(t-\tau) T(t-\tau)\varphi\]
and $T_{\leftarrow}(t) \in \mathcal L(L^2((-r,0),K))$ as the nilpotent left-shift semigroup 
\[(T_{\leftarrow}(t)\varphi)(-\tau)=(\indic_{[-r,0]}\varphi)(t-\tau).\]
It can be shown \cite[Theorem $3.25$]{BP} that the operator $\mathcal A_0$ is the generator of a $C_0$-semigroup 
\begin{equation}
\label{eq:T0}
 \mathcal T_0(t):= \left( \begin{matrix} T(t) & 0 \\ S_t & T_{\leftarrow}(t) \end{matrix} \right).
 \end{equation}
 We continue with a stability condition that plays the analogous role to exponential stability of the semigroup for delay equations: 
 \begin{ass}[MV-condition]
We assume the semigroup $\mathcal T_0$ in \eqref{eq:T0} and operator $\mathcal N$ to satisfy a \emph{Miyadera-Voigt $L^2$-condition} for some $q<1$, i.e.
\begin{equation}
\label{eq:MV2}
 \int_0^{\infty} \left\lVert \mathcal N \mathcal T_{0}(r) \varphi \right\rVert^2 \ dr \le q \left\lVert \varphi \right\rVert^2.
 \end{equation}
 \end{ass}

\begin{defi}
\label{VM2}
We say that $\mathcal T_0$ and $\mathcal N$ satisfy a (truncated) \emph{Miyadera-Voigt $L^1$-condition} if for some $q<1$ and $t_0>0:$
\begin{equation}
\label{eq:MV1}
 \int_0^{t_0} \left\lVert \mathcal N \mathcal T_{0}(r) \varphi \right\rVert \ dr \le q \left\lVert \varphi \right\rVert.
 \end{equation}
By H\"older's inequality, \eqref{eq:MV2} implies \eqref{eq:MV1} for some $t_0>0.$
\end{defi}

Under Assumption \ref{ass:delays}, the Miyadera-Voigt perturbation theorem \cite[Corollary $3.16$]{EN} implies that the state operator $\mathcal A:=\mathcal A_0+\mathcal N$ is the generator of a semigroup $(\mathcal T(t) )$ and the delay equation \eqref{eq:delay} is well-posed \cite[Theorem $3.26$]{BP}. That is, the solution to \eqref{eq:delay} is continuous and we can write the solution to \eqref{eq:delay} by Duhamel's formula for $x_0=(\varphi_0,f) \in \mathcal K$ and $u \in L^2((0,\infty), \mathbb R^n)$ as
\begin{equation}
\begin{split}
\label{eq:sol}
Z(t)&=\mathcal T(t)x_0+ \int_0^t \mathcal T(t-s) \mathbf{B}u(s) \ ds \text{ with output }Y(t)=\mathbf{C}(Z(t)).
\end{split}
\end{equation}
Let $\pi_1$ be the projection from $\mathcal K\ni (x,f) \mapsto x$, then $\pi_1(Z)$ solves \eqref{eq:delay}.

\begin{rem}
Many previous ideas related to model order reduction methods for delay utilized the linear structure of the representation \eqref{eq:sol}. The computational issue with this approach is that the system is inherently infinite-dimensional.

On the other hand, when studying delay system using linear balanced truncation, there are explicit criteria for exponential stability of the semigroup $(\mathcal T(t))$:
\begin{lemm}\cite[Corollary $4.6$]{BFS}
\label{lemm:stab}
If the semigroup $(T(t))$, generated by $A$ in the delay equation \eqref{eq:delay}, satisfies $\left\lVert T(t) \right\rVert \le M e^{-\omega t} $ with $\omega$ strictly larger than some $ \alpha \ge 0$ and $ \frac{Me^{\alpha \tau}}{\omega-\alpha} \left\lVert N \right\rVert<1,$
then the decay bound $\omega_0(\mathcal A)$ of the delayed semigroup $\mathcal T(t)$ satisfies $\omega_0(\mathcal A)>\alpha \ge 0$, i.e. there is some $\mathcal M>0$ such that $\left\lVert \mathcal T(t) \right\rVert \le \mathcal M e^{-\omega_0(\mathcal A) t}.$
\end{lemm}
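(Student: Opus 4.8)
\emph{Proof strategy.} The plan is to reduce the claim to a uniform bound on a rescaled semigroup and then establish that bound by a renewal (Gronwall-type) estimate for the trajectory $\varphi^{\operatorname{del}}$ itself. By continuity of $\beta\mapsto\frac{Me^{\beta\tau}}{\omega-\beta}\|N\|$ on $[0,\omega)$ and the strict hypothesis at $\beta=\alpha$, there is $\alpha'\in(\alpha,\omega)$, hence $\alpha'>0$, with $q:=\frac{Me^{\alpha'\tau}}{\omega-\alpha'}\|N\|<1$; it therefore suffices to produce a constant $\mathcal M>0$ with $\|\mathcal T(t)\|_{\mathcal L(\mathcal K)}\le\mathcal M e^{-\alpha' t}$ for all $t\ge0$, since this already forces $\omega_0(\mathcal A)\ge\alpha'>\alpha\ge0$. (The textbook route through the characteristic equation only delivers $s(\mathcal A)<-\alpha$ for the spectral bound, which does not suffice here: for a general generator $A$ the delay semigroup $\mathcal T$ need not be eventually norm continuous, so $s(\mathcal A)=\omega_0(\mathcal A)$ may fail.)

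First I would control $\varphi^{\operatorname{del}}$ pointwise. Projecting the Duhamel representation \eqref{eq:sol} (with $u=0$) onto $K$ via the block form \eqref{eq:T0} of $\mathcal T_0$ gives the renewal identity $\varphi^{\operatorname{del}}(t)=T(t)\varphi_0+\int_0^tT(t-s)\Phi(\varphi^{\operatorname{del}}_s)\,ds$, which for $\Phi_d$ reads $\varphi^{\operatorname{del}}(t)=T(t)\varphi_0+\int_0^tT(t-s)N\varphi^{\operatorname{del}}(s-\tau)\,ds$. Since $\mathcal T$ is a $C_0$-semigroup it is bounded on the compact interval $[0,r]$, whence $\sup_{s\in[0,r]}\|\varphi^{\operatorname{del}}(s)\|\le\sup_{s\in[0,r]}\|\mathcal T(s)(\varphi_0,f)\|_{\mathcal K}\le B\,\|(\varphi_0,f)\|_{\mathcal K}$ for a constant $B$ depending only on the system. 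Put $\psi(t):=e^{\alpha' t}\|\varphi^{\operatorname{del}}(t)\|$. For $t\ge r$ the delay term samples $\varphi^{\operatorname{del}}$ only at times $\ge r-\tau>0$, so restarting the identity at $r$, multiplying by $e^{\alpha' t}$, using $\|T(t)\|\le Me^{-\omega t}$ and $e^{\alpha' t}\|\varphi^{\operatorname{del}}(s-\tau)\|=e^{\alpha'(t-s)+\alpha'\tau}\psi(s-\tau)$, one obtains
\[ \psi(t)\ \le\ Me^{-(\omega-\alpha')(t-r)}\,\psi(r)\ +\ q\int_r^t(\omega-\alpha')e^{-(\omega-\alpha')(t-s)}\,\psi(s-\tau)\,ds, \]
an inequality whose integral kernel $q(\omega-\alpha')e^{-(\omega-\alpha')(t-s)}$ has total mass $q<1$. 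A Gronwall bootstrap — with $\Psi_T:=\sup_{[0,T]}\psi$, bound the right-hand side by $M\sup_{[0,r]}\psi+q\Psi_T$ for $t\le T$ and solve $\Psi_T\le\max\{\sup_{[0,r]}\psi,\ M\sup_{[0,r]}\psi+q\Psi_T\}$ — then yields $\sup_{t\ge0}\psi(t)\le\frac{\max\{1,M\}}{1-q}\sup_{[0,r]}\psi$, so that $\|\varphi^{\operatorname{del}}(t)\|\lesssim e^{-\alpha' t}\|(\varphi_0,f)\|_{\mathcal K}$ for all $t\ge0$. The integral delay $\Phi_c$ is handled identically: $e^{-\alpha'\sigma}|g(\sigma)|$ together with $\int_{-r}^0|g|\le1$ replaces the discrete bound, $r$ plays the role of $\tau$, and $q$ becomes $\frac{Me^{\alpha' r}}{\omega-\alpha'}\|N\|<1$.

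Next I would pass from this pointwise decay to the operator norm on $\mathcal K$. For $t\ge r$ the $L^2((-r,0),K)$-component of the state $\mathcal T(t)(\varphi_0,f)$ is the trajectory segment $\sigma\mapsto\varphi^{\operatorname{del}}(t+\sigma)$, so $\|\mathcal T(t)(\varphi_0,f)\|_{\mathcal K}^2=\|\varphi^{\operatorname{del}}(t)\|^2+\int_{t-r}^t\|\varphi^{\operatorname{del}}(u)\|^2\,du$; inserting the previous estimate and using $\alpha'>0$ to integrate $e^{-2\alpha' u}$ gives $\|\mathcal T(t)\|_{\mathcal L(\mathcal K)}\le\mathcal M_1e^{-\alpha' t}$ for $t\ge r$. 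On $[0,r]$ the $C_0$-semigroup $\mathcal T$ is bounded, say by $\mathcal M_0$, so there $\|\mathcal T(t)\|\le\mathcal M_0e^{\alpha' r}e^{-\alpha' t}$. Combining the two ranges gives $\|\mathcal T(t)\|\le\mathcal M e^{-\alpha' t}$ for all $t\ge0$ with $\mathcal M:=\max\{\mathcal M_1,\mathcal M_0e^{\alpha' r}\}$, which is the assertion (with decay rate $\alpha'>\alpha$).

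I do not expect a deep difficulty here; the points that need care are the bookkeeping in coupling the pointwise ($K$-valued) renewal estimate with the $\mathcal K$-valued semigroup bound, and the observation that the renewal window cannot be started at $t=0$ — there it would reach into the history $f$, which lies only in $L^2((-r,0),K)$ — so one starts it at $t=r$, where the window has already been filled by the continuous and a priori bounded trajectory. The rescaling performed at the outset, which also ensures $\alpha'>0$ (needed to integrate $e^{-2\alpha' u}$ above), is precisely what upgrades the smallness hypothesis into a strictly positive decay rate. All remaining ingredients are routine: the explicit block form \eqref{eq:T0}, the elementary bound $\|T(t)\|\le Me^{-\omega t}$, and the renewal inequality displayed above.
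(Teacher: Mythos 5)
Your proof is correct in substance, but it follows a genuinely different route from the paper: the paper gives no argument of its own and simply imports the statement from \cite[Corollary 4.6]{BFS}, where the decay bound is obtained by frequency-domain methods (resolvent estimates for $\mathcal A$ on vertical lines combined with operator-valued Fourier multiplier / Gearhart--Pr\"uss-type hyperbolicity theorems, which exploit the Hilbert-space structure of $\mathcal K$). You instead work entirely in the time domain: after enlarging the rate to $\alpha'\in(\alpha,\omega)$ with $q=\frac{Me^{\alpha'\tau}}{\omega-\alpha'}\lVert N\rVert<1$, you restart the variation-of-constants (renewal) identity for the first component at $t=r$, weight by $e^{\alpha' t}$, and close a Gronwall bootstrap because the convolution kernel has mass $q<1$; the $L^2$ history component is then absorbed via the segment property $\mathcal T(t)(\varphi_0,f)=(\varphi(t),\varphi_t)$, valid for $t\ge r$. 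This is more elementary, produces an explicit constant $\mathcal M$ and the strict improvement $\omega_0(\mathcal A)\ge\alpha'>\alpha$, avoids Hilbert-space multiplier machinery (so it would also work on a Banach state space), and, as you note, sidesteps the spectral-mapping obstruction; what it does not deliver is the sharper spectral/hyperbolicity information of \cite{BFS}. Two points should be tightened rather than hand-waved: the renewal identity is not literally obtained by ``projecting \eqref{eq:sol} via the block form \eqref{eq:T0}'' --- with $u=0$ formula \eqref{eq:sol} is just $Z(t)=\mathcal T(t)x_0$, and \eqref{eq:T0} describes $\mathcal T_0$, not $\mathcal T$ --- it is the standard variation-of-constants formula for the delay semigroup (equivalently, the Miyadera--Voigt perturbation series projected onto the first component), and the segment property of the perturbed semigroup is likewise a cited fact from \cite{BP}; both should be invoked explicitly where you use them. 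Note also that the standing assumption $r>\tau$ is exactly what guarantees that the window restarted at $t=r$ samples only the continuous, a priori bounded trajectory, which your estimate relies on. With those references supplied, your argument is complete.
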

\end{rem}

\subsection{Volterra series expansion of the dynamics}

We record that Definition \eqref{eq:MV2} implies the existence of an operator $\mathcal S \in \mathcal L( \mathcal K, L^2((0,\infty),\mathcal K))$ extending $\mathcal N \mathcal T_{0}(t)$ from $D(\mathcal A_0)$ to all of $\mathcal K$ with norm $\left\lVert \mathcal S \right\rVert \le \gamma<1.$ The operator $\mathcal S$, that extends $\mathcal N \mathcal T_{0}(t)$, satisfies $\mathcal S(s)\mathcal T_0(t)= \mathcal S(t+s)$. Let $D(\mathcal A_0) \ni x_n \rightarrow x \in \mathcal K$, then 
\[ \mathcal S(s)\mathcal T_0(t)x= \lim_{n \rightarrow \infty} \mathcal S(s)\mathcal T_0(t)x_n = \lim_{n \rightarrow \infty} \mathcal N \mathcal T_0(s)\mathcal T_0(t)x_n =\mathcal S(s+t)x. \]

The Miyadera-Voigt perturbation theorem \cite[Ch.$3$, Sec.$3$]{EN} allows us to express the semigroup $(\mathcal T(t))$ in $\mathcal L(K)$ generated by the state operator $\mathcal A=\mathcal A_0+\mathcal N$ as a series $\mathcal T(t)x = \sum_{i=0}^{\infty} \mathcal V^i  \mathcal T_0(t)x.$

The operators $\mathcal V^i$ are the so-called \emph{Volterra operators} $\mathcal V \in \mathcal L(L^p([0,T],\mathcal K))$ defined, for any $p \in [1,\infty]$, as 
\begin{equation}
\label{eq:young}
 (\mathcal VF)(t)x = \int_0^t  F(t-s)  (\mathcal Sx)(s) \ ds = ( F *(\mathcal Sx  \indic_{[0,\bullet] }))(t), \text{ for all }t  \in [0,1] 
 \end{equation}
such that by Young's inequality $\Vert \mathcal V F \Vert_{L^p} \le \Vert \mathcal S \Vert_{\mathcal L(L^1,\mathcal K)} \Vert F \Vert_{L^p}.$

From \eqref{eq:sol} and Fubini's theorem it follows that for sets with $i \in \mathbb N$ \\ $\Delta_i(t):=\left\{s \in \mathbb R^i; 0\le s_i\le\dots\le s_1\le t \right\}$ and \emph{delay Volterra kernels}  
\begin{equation}
\begin{split}
\label{eq:Voltker}
h^{\operatorname{delay}}_i(t)&:=\mathcal O_{i-1}(t)\mathbf B \in \mathcal L(\mathbb R^n,\mathcal H) \text{ and } h^{\operatorname{delay}}_{i,\operatorname{in}}(t):=\mathcal O_{i-1}(t)\mathbf B_{\operatorname{in}} \in \mathcal L(\mathbb R^k,\mathcal H)\\
&\text{ where }\mathcal O_{i}(t=(t_1,\dots,t_{i+1}))y:=\mathcal C \mathcal T(t_1)  \prod_{l=2}^{i+1} \left(\mathcal S(t_l) \right)y
\end{split}
\end{equation}
the solution $Y$, in \eqref{eq:sol}, for initial conditions $x_0:=(\varphi_0,0)$, and $u \in L^p((0,\infty),\mathbb R^n)$ is a function $Y \in L^p_{loc}((0,\infty), \mathcal H)$ given by $Y(t) = K_1(t)+K_2(t)$ where
\begin{equation}
\begin{split}
\label{eq:S}
K_1(t)= \sum_{i=0}^{\infty} \mathcal C\mathcal V^i  \mathcal T_0(t)x \quad\text{ and }\quad K_2(t)&=  \int_0^t \mathbf C \mathcal T(t-s)\mathbf Bu(s) \ ds\\
&=  \sum_{i=1}^{\infty} \int_{\Delta_i(t)} h^{\operatorname{delay}}_i(t-s_1,\dots,s_{i-1}-s_i) \ u(s_i)\ ds
\end{split}
\end{equation}
where $\mathcal C$ is defined in \eqref{eq:definitions}.
In \eqref{eq:Voltker} and \eqref{eq:S}, we introduced the delay Volterra kernels, which we shall write down more explicitly, for delay types \eqref{eq:typdelay} indicated by indices $\textbf{d}$(iscrete)$\vert\textbf{c}$(ontinuous), in the following Lemma:
\begin{lemm}
\label{kernels1}
The delay Volterra kernels \eqref{eq:Voltker} satisfy for any $j \in \mathbb N$
\begin{equation}
\begin{split}
\label{eq:discdel}
h^{\operatorname{delay}}_{d,j\vert d,j, \operatorname{in}}(t_1,\dots,t_j)&:= CT(t_1)\prod_{i=2}^{j}  \left(\indic_{(0,\infty)}(t_i-\tau)N T(t_{i}-\tau)\right)B_{\vert \operatorname{in}} \\
h^{\operatorname{delay}}_{c,j \vert c,j, \operatorname{in}}(t_1,\dots,t_j)&:= CT(t_1)\prod_{i=2}^{j}  \left(\int_{-r}^0 \indic_{(0,\infty)}(t_i+s)N T(t_{i}+s) g(s) ds \right)B_{\vert \operatorname{in}}.
\end{split}
\end{equation}
\end{lemm}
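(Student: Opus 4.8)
The plan is to derive the two explicit formulas in \eqref{eq:discdel} directly from the abstract definition of the delay Volterra kernels in \eqref{eq:Voltker}, namely $h^{\operatorname{delay}}_i(t) = \mathcal{O}_{i-1}(t)\mathbf{B}$ with $\mathcal{O}_i(t_1,\dots,t_{i+1}) y = \mathcal{C}\,\mathcal{T}(t_1)\prod_{l=2}^{i+1}\mathcal{S}(t_l)\,y$, by unwinding what each factor does on the product space $\mathcal{K} = K \times L^2((-r,0),K)$. First I would record that, since the input state $x_0$ and the control enter only through $\mathbf{B} = (B,0)$ (resp.\ $\mathbf{B}_{\operatorname{in}} = (B_{\operatorname{in}},0)$), the first component of the argument is what matters, and $\mathcal{C}(\varphi,f) = C\varphi$ only reads off the $K$-component. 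So the whole computation reduces to tracking the $K$-component of $\mathcal{T}_0(t)$, which by \eqref{eq:T0} acts as $T(t)$ on the first slot and feeds $S_t\varphi$ into the history slot.

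The key step is to compute $\mathcal{S}(t)$ explicitly on an element of the form $(\varphi,0)$, i.e.\ to evaluate $\mathcal{N}\mathcal{T}_0(t)(\varphi,0)$ and then argue this extends by density (the extension $\mathcal{S}$ of $\mathcal{N}\mathcal{T}_0(t)$ exists by the Miyadera--Voigt $L^2$-condition, as recorded just before \eqref{eq:young}, and is bounded, so the formula obtained on $D(\mathcal{A}_0)$ persists). From \eqref{eq:generator}, $\mathcal{N}(\psi,g) = (\Phi g, 0)$, and from \eqref{eq:T0}, $\mathcal{T}_0(t)(\varphi,0) = (T(t)\varphi,\ S_t\varphi)$ since the history component of $(\varphi,0)$ is zero and $T_{\leftarrow}(t)$ kills it. Hence $\mathcal{S}(t)(\varphi,0) = (\Phi(S_t\varphi),\ 0)$. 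Now I plug in the two delay operators from \eqref{eq:typdelay}: for $\Phi_d f = N f(-\tau)$ I get $\Phi_d(S_t\varphi) = N (S_t\varphi)(-\tau) = N\,\indic_{[0,\infty)}(t-\tau)\,T(t-\tau)\varphi = \indic_{(0,\infty)}(t-\tau)\,N T(t-\tau)\varphi$ by the definition of $S_t$; for $\Phi_c f = \int_{-r}^0 N f(s) g(s)\,ds$ I get $\Phi_c(S_t\varphi) = \int_{-r}^0 N\,\indic_{[0,\infty)}(t+s)\,T(t+s)\varphi\,g(s)\,ds$ after substituting $-\tau = s$. Crucially, the output of $\mathcal{S}(t)$ is again of the form $(\text{something in }K,\ 0)$, so the same computation can be iterated: each successive factor $\mathcal{S}(t_l)$ in the product just prepends $\indic_{(0,\infty)}(t_l-\tau)NT(t_l-\tau)$ (resp.\ its integral analogue), and the leftmost $\mathcal{C}\mathcal{T}(t_1) = \mathcal{C}\mathcal{T}_0(t_1)$-on-$(\varphi,0)$-piece contributes $C T(t_1)$. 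Composing with $\mathbf{B} = (B,0)$ (resp.\ $\mathbf{B}_{\operatorname{in}}$) on the right gives exactly \eqref{eq:discdel}.

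One subtlety I would be careful about: the leftmost factor is $\mathcal{C}\mathcal{T}(t_1)$ with the full perturbed semigroup $\mathcal{T}$, not $\mathcal{T}_0$. However, since the argument fed to it, namely $\prod_{l=2}^{i+1}\mathcal{S}(t_l)\,\mathbf{B}$, lies in the range of $\mathcal{S}(t_{i+1})$ and hence (by the displayed intertwining $\mathcal{S}(s)\mathcal{T}_0(t) = \mathcal{S}(s+t)$ and the Volterra series $\mathcal{T}(t) = \sum_i \mathcal{V}^i \mathcal{T}_0(t)$) the combination $\mathcal{C}\mathcal{T}(t_1)\mathcal{S}(t_2)$ telescopes: more precisely the kernel identity in \eqref{eq:S} already encodes that the full kernel $h^{\operatorname{delay}}_i$ arising in the Duhamel/Volterra expansion is $\mathcal{O}_{i-1}\mathbf{B}$ with these factors, so what remains is purely the algebraic evaluation above; I would phrase the proof as simply "evaluate $\mathcal{O}_{i-1}(t)\mathbf{B}$ using the formulas for $\mathcal{T}_0$, $S_t$, $\mathcal{S}$, $\mathcal{N}$, $\mathcal{C}$", and the shift/semigroup properties of $T$, observing the form $(\ast,0)$ is preserved at every stage, then read off \eqref{eq:discdel}. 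The main obstacle is bookkeeping rather than conceptual: making sure the index shifts ($i$ vs.\ $j = i$, the $i-1$ product factors, the arguments $t_i - \tau$ appearing inside each $\indic$ and each $T$) line up with the stated formula, and confirming the extension from $D(\mathcal{A}_0)$ to all of $\mathcal{K}$ is legitimate — but the latter is guaranteed by the boundedness of $\mathcal{S}$ established earlier together with continuity of $T$, $C$, $N$, and (for $\Phi_c$) dominated convergence using $g \in L^1$.
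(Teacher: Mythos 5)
Your proposal is correct and follows essentially the same route as the paper's proof: compute $\mathcal S(t)$ explicitly on the dense domain $D(\mathcal A_0)$, extend by continuity using the boundedness guaranteed by the Miyadera--Voigt condition, observe that the output stays of the form $(\,\cdot\,,0)$ so the factors can be iterated, and read off the kernels, with the leftmost factor acting on $(\psi,0)$ to give $CT(t_1)\psi$. The only differences are cosmetic: the paper carries out the computation only for the discrete delay $\Phi_d$, whereas you also spell out $\Phi_c$ and explicitly flag the $\mathcal T$ versus $\mathcal T_0$ reading of \eqref{eq:Voltker}, which the paper leaves implicit.
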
 
\begin{proof}
We restrict our proof to discrete delays $\Phi=\Phi_d$ in the proof. Consider $(\varphi,f) \in D(\mathcal A_0),$ then an explicit computation shows that
\[\mathcal S(t)(\varphi,f)= \Phi(S_t \varphi +T_{\leftarrow}(t) f) =\indic_{(0,\infty)}(t-\tau) N T(t-\tau)\varphi + \indic_{[0,\tau]}(t)  N f(t-\tau).\]
Let $(\varphi_i,f_i) \in D(\mathcal A_0)$ converge to $(\varphi,f) \in \mathcal K,$ then by continuity of $\mathcal S(t)$
\begin{equation}
\begin{split}
 \mathcal S(t)(\varphi,f) &= \lim_{i \rightarrow \infty}\mathcal S(t)(\varphi_i,f_i)= \lim_{i \rightarrow \infty} \mathcal N \mathcal T_0(t)(\varphi_i,f_i)=\lim_{i \rightarrow \infty} (\Phi( S_t\varphi_i+ T_{\leftarrow}(t)f_i),0) \\
&= \lim_{i \rightarrow \infty} \left(\indic_{(\tau,\infty)}(t) N T(t-\tau) \varphi_i \ +\indic_{[0,\tau]}(t) N f_i(t-\tau), 0 \right) \\
&=     \left(\indic_{(0,\infty)}(t-\tau) N T(t-\tau) \varphi +  \indic_{[0,\tau]}(t) N f(t-\tau),0\right)
\end{split}
\end{equation}
shows that $\mathcal{S}(t)(\varphi,0)=\left( \indic_{(0,\infty)}(t-\tau) N T(t-\tau) \varphi,0 \right).$
From the definition of the Volterra kernels $h^{\operatorname{delay}}_m$ \eqref{eq:Voltker} we thus conclude that the first equation in \eqref{eq:discdel} holds. 
\end{proof}

\section{Bilinear balanced truncation for delay systems}
\label{sec:BBTFDS}
In this section, we provide the necessary tools from bilinear balanced truncation theory and apply it to delay systems.

Let $\mathcal{H}$ be a separable Hilbert space and $C \in \mathcal L(K,\mathcal{H})$ the observation operator, we then introduce the central object of the bilinear balanced truncation theory, the bilinear Gramians, cf. \cite{BH18,ZL}. In particular, the mapping and regularity properties have all been shown in \cite{BH18}.
\begin{defi}[Gramians]
\label{Gramians} 
Let $O_0(t_1):=CT(t_1)$. Then, for $i \in \mathbb{N}$ and $y \in K$ define $O_{i}(t_1,\dots,t_{i+1})y:=CT(t_1)  \prod_{j=2}^{i+1} \left(N T(t_j) \right)y$ and bounded operators $\mathscr{O}_i$ for $x,y \in K$ by $\langle x, \mathscr{O}_iy \rangle_K := \int_{(0,\infty)^{i+1}}\langle O_i(s)x, O_i(s)y \rangle_{\mathcal H} \ ds,$
which are summable in operator norm. The \emph{bilinear observability Gramian} $\mathscr{O}^{\operatorname{bil}} \in \mathcal L(K)$ is then given as $\mathscr{O}^{\operatorname{bil}} := \sum_{i=0}^{\infty} \mathscr{O}_i \in \mathcal L(K).$

For the \emph{bilinear reachability Gramian}, we define for $i \in \mathbb N$ and $y \in K$   \[P_{i}(t_1,\dots,t_{i+1})y:= \prod_{j=1}^{i}  \left(T(t_j)^*N^*\right)T(t_{i+1})^*y \]
such that
\begin{equation*}
\begin{split}
\label{eq:P0}
\langle x,\mathscr{P}_0y \rangle_K
&:=\int_{(0,\infty)} \left\langle T(s)^*x, BB^* T(s)^*y \right\rangle_{K } \ ds +\langle x,B_{\operatorname{in}}B_{\operatorname{in}}^* y \rangle \text{ and for }i \in \mathbb N,\\
\langle x,\mathscr{P}_{i}y \rangle_K
&:=\int_{(0,\infty)^{i+1}} \left\langle P_i(s)x, BB^*P_{i}(s)y \right\rangle_{K} \ ds + \int_{(0,\infty)^{i}} \left\langle P_{i-1}(s)x, B_{\operatorname{in}}B_{\operatorname{in}}^* P_{i-1}(s)y \right\rangle_{K} \ ds.
\end{split}
\end{equation*}
The \emph{bilinear reachability Gramian} is the operator $\mathscr P^{\operatorname{bil}}:= \sum_{i=0}^{\infty} \mathscr{P}_i \in \operatorname{TC}(K).$ 
\end{defi}
We introduce operators $W^{\operatorname{bil}}$ and $R^{\operatorname{bil}}$ such that the \emph{observability Gramian} is $\mathscr{O}^{\operatorname{bil}}= W^{\operatorname{bil}*}  W^{\operatorname{bil}}$ and the reachability Gramian is $\mathscr{P}^{\operatorname{bil}} =  R^{\operatorname{bil}}  R^{\operatorname{bil}*}.$ 
\begin{defi}[Observ. \& reach. map]
For $i \in \mathbb{N}_0$ we define the family $W_i\in \mathcal L\left( K, F_{i+1}\left(\mathcal{H}  \right)\right)$ of operators that map $K \ni x \mapsto O_i(\bullet)x$ such that $\left\lVert  W_i \right\rVert = \mathcal O\left( \left(M \left\lVert N \right\rVert ( 2 \omega )^{-1/2} \right)^i\right).$ Then, we can define the \emph{bilinear observability map} $W^{\operatorname{bil}} \in \mathcal L\left(K, F\left(\mathcal{H} \right)\right)$ by $W^{\operatorname{bil}}(x):=\left( W_i(x) \right)_{i \in \mathbb{N}_0}.$

Let $R_i \in \operatorname{HS}\left(F_{i+1}\left(\mathbb{R}^{n}\right) \oplus F_{i}(\mathbb R^k),K\right)$ be given by  
\begin{equation*}
\begin{split}
&R_0(f,g):= \int_{(0,\infty)} T(s) B f(s) \ ds+  B_{\operatorname{in}}g  \text{ and for } i \in \mathbb N \\
&R_i(f,g):=\int_{(0,\infty)^{i+1}} P_i(s)^*Bf(s) \ ds + \int_{(0,\infty)^{i}} P_{i-1}(s)^*B_{\operatorname{in}}g(s) \ ds.
\end{split}
\end{equation*}
The \emph{bilinear reachability map} is defined as $R^{\operatorname{bil}} \in \operatorname{HS}\left( F\left(\mathbb{R}^n\right) \oplus \bigoplus_{i=0}^{\infty}F_i(\mathbb R^k),K\right)$ such that $(f_i,g_i)_{i \in \mathbb N_0} \mapsto \sum_{i=0}^{\infty}  R_i( f_i,g_i).$
For subsequent use, we also define maps 
\begin{equation}
\label{eq:FandG}
 F_i(f_i):=R_i(f_i,0)\text{ and }G_i(g_i):=R_i(0,g_i).
 \end{equation}
\end{defi}
Using the above two operators $R^{\operatorname{bil}}$ and $W^{\operatorname{bil}}$, we can now introduce the bilinear Hankel operator, cf. \cite{BH18}.
\begin{defi}[Hankel operator]
\label{HSop}
The \emph{Hankel operator} is the Hilbert-Schmidt operator $H^{\operatorname{bil}}:= W^{\operatorname{bil}}  R^{\operatorname{bil}} \in \operatorname{HS}\left( F(\mathbb{R}^n)\oplus \bigoplus_{i=0}^{\infty}F_i(\mathbb R^k),F(\mathcal{H})\right)$.
\end{defi}
In particular, if $\mathcal H$ is finite-dimensional then $H^{\text{bil}}$ is of trace-class.

To relate the delayed dynamics to the bilinear Gramians we introduce the integral kernels of the bilinear Hankel operator:
\begin{defi}
\label{bilvolt}
The \emph{bilinear Volterra kernels} $h_{m}$ and $h_{m,\operatorname{in}}$ are the functions defined for $m \in \mathbb{N}_0$ by
\begin{equation}
\begin{split}
\label{eq:Volt}
h_{m}(t_0,\dots,t_{m})&:=O_{m}(t_0,\dots,t_m)B\text{ and }
h_{m,\operatorname{in}}(t_0,\dots,t_{m}):=O_{m}(t_0,\dots,t_m)B_{\operatorname{in}}.
\end{split}
\end{equation}
\end{defi}
From \eqref{tracenorm}, we have estimates on the difference of the trace distance of two Hankel operators with $F_i,G_i$ as in \eqref{eq:FandG},
\begin{equation}
\begin{split}
\label{eq:estmate}
&\sum_{i=0}^{\infty} \left\lVert \Delta(W_iF_i) \right\rVert_{\text{TC}} \le 2\left\lVert \Delta(H^{\operatorname{bil}}) \right\rVert_{\text{TC}}, \ \sum_{i=0}^{\infty} \left\lVert \Delta(W_iF_{i+1}) \right\rVert_{\text{TC}} \le 2\left\lVert \Delta(H^{\operatorname{bil}}) \right\rVert_{\text{TC}} \text{ and } \\
&\sum_{i=0}^{\infty} \left\lVert \Delta(W_iG_i) \right\rVert_{\text{TC}} \le 2\left\lVert \Delta(H^{\operatorname{bil}}) \right\rVert_{\text{TC}}, \ \sum_{i=0}^{\infty} \left\lVert \Delta(W_iG_{i+1}) \right\rVert_{\text{TC}} \le 2\left\lVert \Delta(H^{\operatorname{bil}}) \right\rVert_{\text{TC}}. \\
\end{split}
\end{equation}
The \emph{bilinear Gramians} satisfy the following Lyapunov equations which hold as operator equations, i.e. without testing against elements $x,y \in D(A)$, as soon as $A$ is a bounded operator, see \eqref{eq:Lyapunoveq},
\begin{lemm}[Lyapunov equation]
For all $x,y \in D(A)$ the bilinear observability Gramian satisfies the Lyapunov equation
\begin{equation}
\begin{split}
\label{eq:Lyap0}
&\left\langle \mathscr{O}^{\operatorname{bil}}Ax,y \right\rangle_K + \left\langle \mathscr{O}^{\operatorname{bil}}x,Ay \right\rangle_K + \left\langle  \mathscr{O}^{\operatorname{bil}} Nx,Ny \right\rangle_K+  \left\langle Cx,Cy\right\rangle_K=0.
\end{split}
\end{equation}
For all $x,y \in D(A^*)$ the bilinear reachability Gramian satisfies the Lyapunov equation
\begin{equation}
\begin{split}
\label{Lyap1}
&\left\langle \left(\mathscr{P}^{\operatorname{bil}}-B_{\operatorname{in}}B_{\operatorname{in}}^*\right)A^*x,y \right\rangle_K + \left\langle  \left(\mathscr{P}^{\operatorname{bil}}-B_{\operatorname{in}}B_{\operatorname{in}}^*\right)x,A^*y \right\rangle_K \\
 &+ \left\langle  \left(\mathscr{P}^{\operatorname{bil}}-B_{\operatorname{in}}B_{\operatorname{in}}^*\right) N^*x,N^*y \right\rangle_K
 +  \left\langle (BB^*+B_{\operatorname{in}}B_{\operatorname{in}}^*)x,y\right\rangle_K=0.
\end{split}
\end{equation}
\end{lemm}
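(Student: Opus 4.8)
The plan is to obtain both Lyapunov identities from a single mechanism: inside the integral defining each level $\mathscr{O}_i$ (resp.\ $\mathscr{P}_i$) of the Gramian, differentiate in the \emph{last} time variable, integrate it out by the fundamental theorem of calculus, and observe that the resulting boundary term at that variable telescopes to the next-lower level of the Volterra hierarchy. Throughout, absolute convergence — and hence the validity of Fubini, differentiation under the integral, and termwise summation — is guaranteed by the geometric bound $\|\mathscr{O}_i\|+\|\mathscr{P}_i\|=\mathcal{O}\big((M\|N\|/\sqrt{2\omega})^{2i}\big)$ coming from the stability hypothesis $M\|N\|/\sqrt{2\omega}<1$, exactly as for the bilinear Gramians in \cite{BH18}.

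\textbf{Observability Gramian.} Fix $x,y\in D(A)$. Each $\mathscr{O}_i$ is self-adjoint, so $\langle\mathscr{O}_iAx,y\rangle_K+\langle\mathscr{O}_ix,Ay\rangle_K=\int_{(0,\infty)^{i+1}}\big(\langle O_i(s)Ax,O_i(s)y\rangle_{\mathcal{H}}+\langle O_i(s)x,O_i(s)Ay\rangle_{\mathcal{H}}\big)\,ds$, the integral converging absolutely since $\|O_i(s)\|\le\|C\|M^{i+1}\|N\|^ie^{-\omega(s_1+\cdots+s_{i+1})}$. Writing $O_i(s_1,\dots,s_{i+1})z=\big[CT(s_1)NT(s_2)\cdots NT(s_i)N\big]\,T(s_{i+1})z$ and using that $t\mapsto T(t)z$ is $C^1$ on $D(A)$ with derivative $T(t)Az$, the integrand equals $\partial_{s_{i+1}}\langle O_i(s)x,O_i(s)y\rangle_{\mathcal{H}}$. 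Integrating in $s_{i+1}$ over $(0,\infty)$: the contribution at $s_{i+1}=\infty$ vanishes since $\|T(s_{i+1})\|\le Me^{-\omega s_{i+1}}$, while at $s_{i+1}=0$ the telescoping identity $O_i(s_1,\dots,s_i,0)z=O_{i-1}(s_1,\dots,s_i)(Nz)$ (for $i\ge1$; and $O_0(0)z=Cz$) gives, after integrating the remaining $i$ variables,
\[\langle\mathscr{O}_iAx,y\rangle_K+\langle\mathscr{O}_ix,Ay\rangle_K=\begin{cases}-\langle\mathscr{O}_{i-1}Nx,Ny\rangle_K & \text{for }i\ge1,\\ -\langle Cx,Cy\rangle & \text{for }i=0.\end{cases}\]
Summing over $i$ and using $\mathscr{O}^{\operatorname{bil}}=\sum_i\mathscr{O}_i$ in operator norm, the right-hand side collapses to $-\langle Cx,Cy\rangle-\langle\mathscr{O}^{\operatorname{bil}}Nx,Ny\rangle_K$, which is \eqref{eq:Lyap0}.

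\textbf{Reachability Gramian.} I would first rearrange the defining series: subtracting $B_{\operatorname{in}}B_{\operatorname{in}}^*$ cancels the $B_{\operatorname{in}}B_{\operatorname{in}}^*$-block of $\mathscr{P}_0$, and the $B_{\operatorname{in}}$-integral in $\mathscr{P}_i$ ($i\ge1$) is precisely the $B$-integral in $\mathscr{P}_{i-1}$ with $B$ replaced by $B_{\operatorname{in}}$, so that $\mathscr{P}^{\operatorname{bil}}-B_{\operatorname{in}}B_{\operatorname{in}}^*=\sum_{i\ge0}\int_{(0,\infty)^{i+1}}P_i(s)^*\big(BB^*+B_{\operatorname{in}}B_{\operatorname{in}}^*\big)P_i(s)\,ds$. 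For $x,y\in D(A^*)$, and with $M_0\in\{BB^*,B_{\operatorname{in}}B_{\operatorname{in}}^*\}$, the same computation applies to $\int_{(0,\infty)^{i+1}}\langle P_i(s)x,M_0P_i(s)y\rangle_K\,ds$: now $P_i(s_1,\dots,s_{i+1})z=\big[T(s_1)^*N^*\cdots T(s_i)^*N^*\big]\,T(s_{i+1})^*z$, the map $t\mapsto T(t)^*z$ is $C^1$ on $D(A^*)$ (the adjoint $C_0$-semigroup on a Hilbert space), the boundary term at infinity vanishes by $\|T(t)^*\|=\|T(t)\|\le Me^{-\omega t}$, and at $s_{i+1}=0$ the telescoping identity reads $P_i(s_1,\dots,s_i,0)z=P_{i-1}(s_1,\dots,s_i)(N^*z)$. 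Summing over $i$ and over the two choices of $M_0$ produces $\langle(\mathscr{P}^{\operatorname{bil}}-B_{\operatorname{in}}B_{\operatorname{in}}^*)A^*x,y\rangle_K+\langle(\mathscr{P}^{\operatorname{bil}}-B_{\operatorname{in}}B_{\operatorname{in}}^*)x,A^*y\rangle_K=-\langle(BB^*+B_{\operatorname{in}}B_{\operatorname{in}}^*)x,y\rangle_K-\langle(\mathscr{P}^{\operatorname{bil}}-B_{\operatorname{in}}B_{\operatorname{in}}^*)N^*x,N^*y\rangle_K$, i.e.\ \eqref{Lyap1}.

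\textbf{Main obstacle.} There is no genuinely hard step here — the scheme is that of the $B_{\operatorname{in}}$-free bilinear Gramians in \cite{BH18}, the new input being only the bookkeeping of the $B_{\operatorname{in}}B_{\operatorname{in}}^*$ terms. What needs care is: (i) peeling off the level $i=0$ separately, since there is no $\mathscr{O}_{-1}$ — this is precisely where the inhomogeneities $\langle Cx,Cy\rangle$ and $\langle(BB^*+B_{\operatorname{in}}B_{\operatorname{in}}^*)x,y\rangle$ are generated; (ii) noting that one only ever differentiates the last semigroup factor, the one acting directly on $x$, so that $Nx$ resp.\ $N^*x$ never needs to lie in $D(A)$ resp.\ $D(A^*)$; and (iii) the routine verification that the per-level geometric bound legitimizes Fubini, differentiation under the integral sign, and termwise summation.
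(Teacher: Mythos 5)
Your proof is correct and takes essentially the same route as the paper: the one genuinely new step in both arguments is the rearrangement $\mathscr{P}^{\operatorname{bil}}-B_{\operatorname{in}}B_{\operatorname{in}}^*=\sum_{i\ge 0}\int_{(0,\infty)^{i+1}}P_i(s)^*\left(BB^*+B_{\operatorname{in}}B_{\operatorname{in}}^*\right)P_i(s)\,ds$, i.e.\ recognizing this operator as the sum of two standard bilinear reachability Gramians with control operators $B$ and $B_{\operatorname{in}}$, while the observability part is unchanged from the bilinear case. The only difference is that the paper then simply invokes \cite[Lemma $2.2$]{BH18} for the component Lyapunov equations, whereas you reprove that ingredient directly via the telescoping fundamental-theorem-of-calculus argument in the last time variable — carried out correctly, including the point that only the last semigroup factor is differentiated, so $Nx$ and $N^*x$ need not lie in $D(A)$ or $D(A^*)$.
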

\begin{proof}
Both Lyapunov equations can be obtained immediately from results on bilinear theory \cite[Lemma $2.2$]{BH18} as follows: We observe that $\mathscr{P}^{\operatorname{bil}}$ can be written as a sum of two standard bilinear reachability Gramians $\mathscr{P}^{\operatorname{bil}}_{a\vert b}$ with control operators $B_a=B$ and $B_b=B_{\operatorname{in}}$, respectively, and the projection $B_{\operatorname{in}}B_{\operatorname{in}}^*$ added to them.
Each of the Gramians $\mathscr{P}^{\operatorname{bil}}_{a\vert b}$ satisfies a Lyapunov equation \cite[Lemma $2.2$]{BH18} with $i\in \left\{a,b\right\}$
\begin{equation}
\begin{split}
&\left\langle \mathscr{P}_{i}^{\operatorname{bil}} A^*x,y \right\rangle_K + \left\langle \mathscr{P}^{\operatorname{bil}}_i x,A^*y \right\rangle_K + \left\langle  \mathscr{P}^{\operatorname{bil}}_i N^*x,N^*y \right\rangle_K
 +  \left\langle B_{i} B_i^*x,y\right\rangle_K=0.
\end{split}
\end{equation}
Adding them together and using that $\mathscr{P}^{\operatorname{bil}} - B_{\operatorname{in}}B_{\operatorname{in}}^* = \mathscr{P}_a^{\operatorname{bil}} + \mathscr{P}_b^{\operatorname{bil}}$ yields the claim. The Lyapunov equation for the observability Gramian coincides with the one in \cite{BH18}.
\end{proof}
The Lyapunov equations \eqref{eq:Lyap0} and \eqref{Lyap1} imply the following two interpretations for Gramians with different kind of delays: 
\begin{prop}
\label{homsys}
All elements $(\varphi_0,f) \in \operatorname{ker}(\mathscr{O}) \times L^2((-r,0),\operatorname{ker}(\mathscr O))$ are unobservable under the evolution of the homogeneous system. That is, the solution to 
\begin{equation}
\begin{split}
\label{eq:homogeneousevoleq}
\varphi'(t)=A\varphi(t)+N(K\varphi)(t),\ \text{ for }t>0
\end{split}
 \end{equation} 
satisfies $C\varphi(t)=0$ for all $t>0$ for $\tau>0$ and $g:[-r,0]\rightarrow \mathbb R$ as in \eqref{eq:typdelay}, where
\begin{equation}
\label{eq:K}
 (K\varphi)(t) = \varphi(t-\tau) \text{ or } (K\varphi)(t)= \int_{-r}^0g(s) \varphi(t+s) \ ds. 
 \end{equation}
\end{prop}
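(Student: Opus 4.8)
The plan is to identify $V:=\operatorname{ker}(\mathscr O)$ (with $\mathscr O=\mathscr O^{\operatorname{bil}}$) as a closed subspace of $K$ that lies in $\operatorname{ker}(C)$ and is invariant under both $N$ and the semigroup $(T(t))$, and then to show that the entire homogeneous delay trajectory $\varphi(t)$ stays inside $V$, so that it is annihilated by $C$.

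First I would analyse $V$. Since $\mathscr O^{\operatorname{bil}}=\sum_{i\ge 0}\mathscr O_i$ with each $\mathscr O_i\ge 0$, a vector $x$ lies in $V$ if and only if $\langle x,\mathscr O_ix\rangle_K=0$ for every $i$, i.e.\@ if and only if $O_i(s)x=0$ for a.e.\@ $s\in(0,\infty)^{i+1}$ and every $i\in\mathbb N_0$; by strong continuity of $(T(t))$ the maps $s\mapsto O_i(s)x$ are continuous and extend continuously to $[0,\infty)^{i+1}$, so in fact $O_i(s)x=0$ for all $s$ and all $i$. Evaluating the case $i=0$ at $t_1=0$ gives $Cx=0$, hence $V\subseteq\operatorname{ker}(C)$. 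The semigroup law yields, for all $u\in(0,\infty)^{i+1}$ and $t\ge 0$, the identities $O_i(u)\big(T(t)x\big)=O_i(u_1,\dots,u_i,u_{i+1}+t)x$ and $O_i(u)\big(Nx\big)=O_{i+1}(u_1,\dots,u_{i+1},0)x$, whose right-hand sides vanish when $x\in V$; hence $T(t)V\subseteq V$ for all $t\ge 0$ and $NV\subseteq V$. (Equivalently, these two invariances follow from the Lyapunov equation \eqref{eq:Lyap0} on testing against $y=x\in V\cap D(A)$ and using $\mathscr O\ge 0$.) Finally $V$ is closed, being the kernel of a bounded operator.

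Next I would propagate this invariance. By the well-posedness recalled in Section~\ref{sec:DDE}, the homogeneous equation \eqref{eq:homogeneousevoleq} with $\varphi(0)=\varphi_0\in V$ and $V$-valued history $\varphi|_{(-r,0)}=f$ (i.e.\@ zero control in \eqref{eq:delay}) has the mild representation $\varphi(t)=T(t)\varphi_0+\int_0^t T(t-s)\,N(K\varphi)(s)\,ds$, equivalently $\varphi(t)=\pi_1\big(\mathcal T(t)x_0\big)$ for $x_0=(\varphi_0,f)$, with the convergent Volterra series $\mathcal T(t)=\sum_{i\ge 0}\mathcal V^i\mathcal T_0(t)$ of \eqref{eq:young}--\eqref{eq:S}. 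For both delay rules in \eqref{eq:K}, the operator $K$ is causal --- $(K\psi)(s)$ depends only on $\psi|_{(-r,s)}$ --- and sends $V$-valued histories to $V$-valued functions, using $\int_{-r}^0|g(s)|\,ds\le 1$ and the fact that $V$ is a closed subspace (so a Bochner integral of a $V$-valued integrand lies in $V$). Hence, running the Picard iteration $\varphi^{(m+1)}(t)=T(t)\varphi_0+\int_0^t T(t-s)N(K\varphi^{(m)})(s)\,ds$ (with $\varphi^{(0)}$ equal to $f$ on $(-r,0)$ and to $\varphi_0$ on $[0,\infty)$), which converges to $\varphi$ by the Miyadera--Voigt/Young estimate, one checks inductively that every iterate is $V$-valued: $T(t)\varphi_0\in V$, the integrand $T(t-s)N(K\varphi^{(m)})(s)$ is $V$-valued a.e.\@ because $\varphi^{(m)}$ and $f$ are $V$-valued and $N,T(t-s)$ preserve $V$, and its Bochner integral stays in the closed subspace $V$. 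Passing to the limit gives $\varphi(t)\in V$ for all $t>0$, whence $C\varphi(t)=0$.

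I expect the only genuinely non-formal step to be the analysis of $V=\operatorname{ker}(\mathscr O)$ in the second paragraph: upgrading the a.e.\@ vanishing of the bilinear observability kernels to pointwise vanishing via strong continuity, and from there extracting the $N$- and $(T(t))$-invariance of $V$; once this is in hand, the rest is a routine bootstrapping along the Duhamel/Volterra representation. A secondary point needing some care is the continuous-delay case: there $K\varphi(s)$ already involves $\varphi$ at times arbitrarily close to $s$ from below, so one cannot advance by fixed time steps (as one can for the discrete delay, marching over successive intervals $[k\tau,(k+1)\tau]$) and must instead rely on the contraction/Picard argument on short intervals, whose convergence is exactly what the truncated Miyadera--Voigt condition of Definition~\ref{VM2} provides.
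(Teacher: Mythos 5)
Your proposal is correct, and it reaches the conclusion by a noticeably different implementation of the same two-step strategy. For the structural facts about $V=\operatorname{ker}(\mathscr O)$, the paper invokes the Lyapunov equation \eqref{eq:Lyap0} (via \cite[Lemma 2.3]{BH18}) to get $N(\operatorname{ker}\mathscr O)\subset\operatorname{ker}\mathscr O$, $A(\operatorname{ker}\mathscr O\cap D(A))\subset\operatorname{ker}\mathscr O$ and $\operatorname{ker}\mathscr O\subset\operatorname{ker}C$, whereas you read them off directly from the kernel representation of the Gramian: $x\in V$ iff all $O_i(\cdot)x$ vanish a.e., upgraded to everywhere by strong continuity, which gives you the slightly stronger semigroup invariance $T(t)V\subseteq V$ rather than only generator invariance on $D(A)$ (you correctly note the Lyapunov route as an equivalent alternative). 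For the propagation step, the paper pairs the equation weakly against $\alpha\in\operatorname{ker}(\mathscr O)^{\perp}$ for data in a dense set of regular initial conditions and then approximates general $(\varphi_0,f)$ using continuity of $\mathcal T(t)$; you instead bootstrap along the componentwise Duhamel/Picard iteration, using closedness of $V$ and the stability of Bochner integrals under closed subspaces, which handles $L^2$ histories directly and avoids any appeal to classical solutions or density. The price of your route is that you must justify that the iteration (whose convergence the Miyadera--Voigt condition indeed supplies, as you say) converges to the same mild solution $\pi_1(\mathcal T(t)x_0)$ defined through the product-space semigroup of Section \ref{sec:DDE}; this identification is standard for delay semigroups but should be stated explicitly. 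The paper's argument is shorter given the cited bilinear lemma and mirrors the bilinear case verbatim; yours is more self-contained and arguably cleaner about the regularity of the trajectory, since it never needs $\varphi(t)\in D(A)$.
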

\begin{proof}
We start by assuming that $(\varphi_0,f) \in (\operatorname{ker}(\mathscr{O})\cap D(A)) \times H^1((-r,0),\operatorname{ker}(\mathscr O))$ with $\varphi_0 =f(0)$ first. This set is dense, as it is precisely $D(\mathcal A) \cap \left(\operatorname{ker}(\mathscr O) \times L^2((-r,0),\operatorname{ker}(\mathscr O))\right)$, cf. \eqref{eq:generator}.
As in the proof of \cite[Lemma $2.3$]{BH18} one shows that the first Lyapunov equation \eqref{eq:Lyap0} yields
\begin{equation*}
\begin{split}
&N \left(\operatorname{ker}(\mathscr{O}) \right) \subset \operatorname{ker}(\mathscr{O}), \ \operatorname{ker}(\mathscr{O}) \subset \operatorname{ker}(C), \text{ and }  A\left(\operatorname{ker}(\mathscr{O})\cap D(A) \right) \subset \operatorname{ker}(\mathscr{O}).
\end{split}
\end{equation*}

Hence, the homogeneous equation \eqref{eq:homogeneousevoleq} for $\alpha \in \operatorname{ker}(\mathscr{O})^{\perp}$ and $\varphi(t) \in \operatorname{ker}(\mathscr{O}) \cap D(A)$ satisfies
\begin{equation*}
\langle \alpha, \varphi'(t) \rangle = \langle \alpha, A \varphi(t) \rangle +   \langle \alpha, N K\varphi(t) \rangle=0.
\end{equation*}
Thus, the flow of the homogeneous problem \eqref{eq:homogeneousevoleq} leaves $\operatorname{ker}(\mathscr{O})$ invariant.
From the inclusion $\operatorname{ker}(\mathscr{O}) \subset \operatorname{ker}(C)$, we obtain
$C \varphi(t)=0.$
The statement follows then for arbitrary $\varphi:=(\varphi_0,f) \in \operatorname{ker}(\mathscr{O}) \times L^2((-r,0),\operatorname{ker}(\mathscr O))$ by approximating $\varphi$ with $D(\mathcal A_0) \cap \operatorname{ker}(\mathscr O) \times L^2((-r,0),\operatorname{ker}(\mathscr O))$ such that  $\langle \alpha,\pi_1(\mathcal{T}(t)\varphi)  \rangle= \lim_{i \rightarrow \infty} \langle  \alpha,\pi_1(\mathcal{T}(t)\varphi_i)  \rangle=0$.
\end{proof}

\begin{prop}
\label{homsys2}
The closure of the range of the reachability Gramian $\mathscr{P}$ is an invariant space of the flow of $\varphi'(t) = A\varphi(t)+  NK\varphi(t) + Bu(t)$
with $K$ as in \eqref{eq:K}.
That is, for any $(\varphi_0,f) \in \overline{\operatorname{ran}(\mathscr{P})} \times L^2((-r,0),\overline{\operatorname{ran}(\mathscr P)})$ the solution $\varphi$ stays in $\overline{\operatorname{ran}(\mathscr{P})}.$
\end{prop}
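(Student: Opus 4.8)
The plan is to argue, in the spirit of Proposition~\ref{homsys} and \cite[Lemma $2.3$]{BH18}, that $V:=\overline{\operatorname{ran}(\mathscr P)}$ is a joint invariant subspace for the data of the system. Since $\mathscr P$ is self-adjoint, $V=\operatorname{ker}(\mathscr P)^{\perp}$. The key claim is the trio of inclusions
\[
\operatorname{ran}(B)\subset V,\qquad N V\subset V,\qquad T(t)V\subset V\ \text{ for all }t\ge 0 .
\]
Granting these, I would note that the nilpotent left shift $T_{\leftarrow}(t)$ preserves $L^2((-r,0),V)$ and the operator $S_t$ of \eqref{eq:T0} maps $V$ into $L^2((-r,0),V)$ because it is assembled from the $T(s)$, so $\mathcal T_0(t)$ leaves $V\times L^2((-r,0),V)$ invariant; by the computation of $\mathcal S(t)$ in the proof of Lemma~\ref{kernels1} — which applies $N$ to a $T(s)$-evolution of the $K$-component resp.\@ to (an integral of) the history component — $\mathcal S(t)$ also sends $V\times L^2((-r,0),V)$ into $V\times\{0\}$, so the Volterra operator $\mathcal V$ of \eqref{eq:young} preserves the closed subspace of $L^p$-functions valued in $V\times L^2((-r,0),V)$, and hence so does $\mathcal T(t)=\sum_{i\ge0}\mathcal V^{i}\mathcal T_0(t)$. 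Finally, for $(\varphi_0,f)\in V\times L^2((-r,0),V)$ the mild solution \eqref{eq:sol}, $Z(t)=\mathcal T(t)(\varphi_0,f)+\int_0^t\mathcal T(t-s)\mathbf B u(s)\,ds$, has every term in the closed subspace $V\times L^2((-r,0),V)$ — the integrand because $\mathbf Bu(s)=(Bu(s),0)$ with $Bu(s)\in V$ — so $\varphi(t)=\pi_1(Z(t))\in V$.

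It therefore remains to deduce the three inclusions from the reachability Lyapunov equation \eqref{Lyap1}. Following \cite[Lemma $2.2$]{BH18} I would write $Q:=\mathscr P^{\operatorname{bil}}-B_{\operatorname{in}}B_{\operatorname{in}}^{*}=\mathscr P_a^{\operatorname{bil}}+\mathscr P_b^{\operatorname{bil}}\ge0$ as a sum of two ordinary bilinear reachability Gramians with control operators $B$ and $B_{\operatorname{in}}$; testing the Lyapunov equation of the $B_{\operatorname{in}}$-summand at any $z\in\operatorname{ker}(\mathscr P_b^{\operatorname{bil}})\cap D(A^{*})$ forces $B_{\operatorname{in}}^{*}z=0$, so $\operatorname{ker}(Q)\subset\operatorname{ker}(B_{\operatorname{in}}^{*})$ and hence $\operatorname{ker}(\mathscr P)=\operatorname{ker}(Q)$. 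Now take $x\in\operatorname{ker}(\mathscr P)\cap D(A^{*})$: since $Qx=0$ and $Q=Q^{*}$, the terms $\langle QA^{*}x,x\rangle$ and $\langle Qx,A^{*}x\rangle$ in \eqref{Lyap1} vanish, leaving $\langle QN^{*}x,N^{*}x\rangle+\|B^{*}x\|^{2}+\|B_{\operatorname{in}}^{*}x\|^{2}=0$, so that $B^{*}x=0$ and $N^{*}x\in\operatorname{ker}(Q)=\operatorname{ker}(\mathscr P)$. As in \cite{BH18} one moreover shows that $\operatorname{ker}(\mathscr P)$ is invariant under the resolvents $(\lambda-A^{*})^{-1}$ for $\lambda>\omega$; this makes $\operatorname{ker}(\mathscr P)\cap D(A^{*})$ dense in $\operatorname{ker}(\mathscr P)$, so the two properties above pass to all of $\operatorname{ker}(\mathscr P)$ ($\operatorname{ker}(B^{*})$ being closed and $N^{*}$ bounded), and it yields $T(t)^{*}\operatorname{ker}(\mathscr P)\subset\operatorname{ker}(\mathscr P)$ for all $t\ge0$ via the Yosida approximation of $T(t)^{*}$. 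Taking orthogonal complements turns these into $\operatorname{ran}(B)\subset V$, $NV\subset V$ and $T(t)V\subset V$; the same computation gives in addition $\operatorname{ran}(B_{\operatorname{in}})\subset V$, which is natural for the model-reduction interpretation.

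I expect the main obstacle to be the interplay of the unbounded generator $A$ with the closed subspace $V$: upgrading the purely infinitesimal invariance of $\operatorname{ker}(\mathscr P)$ under $A^{*}$ and $N^{*}$ extracted from \eqref{Lyap1} first to invariance under $T(t)$ — which needs the resolvent invariance and a Yosida/Hille approximation argument exactly as on the observability side in Proposition~\ref{homsys} — and then, through the Volterra series, to invariance of $V\times L^2((-r,0),V)$ under the full delay semigroup $\mathcal T(t)$, where one must also check that $\mathcal S(t)$ preserves $V$-valuedness for general (not necessarily $H^1$) history functions by approximating with elements of $D(\mathcal A_0)\cap\bigl(V\times L^2((-r,0),V)\bigr)$ and using continuity of $\mathcal S$. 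By contrast, the only feature genuinely absent from the bilinear case of \cite{BH18}, the shift by $B_{\operatorname{in}}B_{\operatorname{in}}^{*}$ in \eqref{Lyap1}, causes no trouble thanks to the identity $\operatorname{ker}(\mathscr P)=\operatorname{ker}(Q)$.
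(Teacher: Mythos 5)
Your proposal is correct and follows essentially the same route as the paper: both extract from the Lyapunov equation \eqref{Lyap1} that $\operatorname{ker}(\mathscr P)^{\perp}=\overline{\operatorname{ran}(\mathscr P)}$ is invariant under $N$ and $A$ (resp.\ $T(t)$) and contains the vectors $\psi_i$ spanning $\operatorname{ran}(B)$, and then conclude via the Duhamel representation that the delayed flow preserves this subspace. The differences are only in execution: you are more explicit about the $B_{\operatorname{in}}B_{\operatorname{in}}^{*}$-shift (via $\operatorname{ker}(Q)=\operatorname{ker}(\mathscr P)$) and you propagate the invariance at the semigroup level through $\mathcal T_0$, $\mathcal S$ and the Volterra series, whereas the paper tests $\langle\alpha,\varphi'(t)\rangle=0$ for $\alpha\in\operatorname{ker}(\mathscr P)$ on a dense set of data and then approximates.
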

\begin{proof}
We start by assuming that $(\varphi_0,f) \in D(\mathcal A) \cap (\operatorname{ker}(\mathscr{P})^{\perp} \times H^1((-r,0),\operatorname{ker}(\mathscr{P})^{\perp}),$ first. 
From the second Lyapunov equation \eqref{Lyap1} we deduce, similarly to the previous Lemma, that
\begin{equation*} \begin{split}
&N \left(\operatorname{ker}(\mathscr{P})^{\perp}\right) \subset \operatorname{ker}(\mathscr{P})^{\perp}, \  \psi_i \in  \operatorname{ker}(\mathscr{P})^{\perp}, \text{ and }  A( \operatorname{ker}(\mathscr{P})^{\perp} \cap D(A) ) \subset  \operatorname{ker}(\mathscr{P})^{\perp}
\end{split} \end{equation*}
where we recall that $Bu =\sum_{i=1}^n \psi_i u_i.$

This shows that the homogeneous equation for $\alpha \in \operatorname{ker}(\mathscr{P})$ and $\varphi(t) \in \operatorname{ker}(\mathscr{P})^{\perp} \cap D(A)$ satisfies
\begin{equation*}
\langle \alpha, \varphi'(t) \rangle = \langle \alpha, A \varphi(t) \rangle +  \langle \alpha, N(K \varphi)(t) \rangle=0.
\end{equation*}
Thus, the flow of the homogeneous problem \eqref{eq:homogeneousevoleq} leaves $\operatorname{ker}(\mathscr{P})^{\perp}$ invariant.
From $\operatorname{ker}(\mathscr{P})^{\perp} \cap D(A) \subset \operatorname{ker}(C)$, we obtain
$C \varphi(t)=0.$
The statement follows then for general $x:=(\varphi_0,f)\in \operatorname{ker}(\mathscr{P})^{\perp} \times L^2((-r,0),\operatorname{ker}(\mathscr{P})^{\perp})$ by approximating $x$ with 
\[x_i \in D(\mathcal A) \cap \operatorname{ker}(\mathscr{P})^{\perp}\times L^2((-r,0),\operatorname{ker}(\mathscr{P})^{\perp})\] such that  $\langle \alpha,\pi_1(\mathcal{T}(t)x)  \rangle= \lim_{i \rightarrow \infty} \langle  \alpha,\pi_1(\mathcal{T}(t)x_i)  \rangle=0.$ 
The inhomogeneous equation satisfies then $\langle \pi_1(Z(t)),\alpha \rangle =\langle \pi_1(\mathcal T(t)x),\alpha \rangle+ \sum_{i=1}^n\int_0^t \langle \pi_1(\mathcal T(t-s)(\psi_i,0)),\alpha \rangle u_i(s) \ ds=0.$
\end{proof}

\section{Proof of Theorem \ref{theo:err}}
\label{sec:Proof1}
In this section we provide the proof of Theorem \ref{theo:err} which provides error estimates on the difference of two delay systems with control \eqref{eq:delay2}:
\begin{equation}
\begin{split}
\label{eq:delay3}
\varphi^{\operatorname{bild'}}(t) &= A \varphi^{\operatorname{bild}}(t) + (\Phi \varphi^{\operatorname{bild}})(t) v(t)+ Bu(t), \ t>0 \\
\varphi^{\operatorname{bild}}(0)&= \varphi_0, \quad \varphi^{\operatorname{bild}}(\sigma)= 0, \text{ for } \sigma \in (-r,0).
\end{split}
\end{equation}

As in \eqref{eq:S}, we let $K_1(t):=C\varphi_1^{\operatorname{bild}}(t)$ be defined in terms of the solution to
\begin{equation}
\begin{split}
\label{eq:delay4}
\varphi_1^{\operatorname{bild'}}(t) &= A \varphi_1^{\operatorname{bild}}(t) + (\Phi \varphi_1^{\operatorname{bild}})(t) v(t)+ Bu(t), \ t>0 \\
\varphi_1^{\operatorname{bild}}(0)&=0, \quad \varphi_1^{\operatorname{bild}}(\sigma)= 0, \text{ for } \sigma \in (-r,0)
\end{split}
\end{equation}
and $K_2(t):=C\varphi_2^{\operatorname{bild}}(t)$ where $\varphi_2^{\operatorname{bild}}$ solves
\begin{equation}
\begin{split}
\label{eq:delay5}
\varphi_2^{\operatorname{bild'}}(t) &= A \varphi_2^{\operatorname{bild}}(t) + (\Phi \varphi_2^{\operatorname{bild}})(t) v(t), \ t>0 \\
\varphi_2^{\operatorname{bild}}(0)&= \varphi_0, \quad \varphi_2^{\operatorname{del}}(\sigma)= 0, \text{ for } \sigma \in (-r,0).
\end{split}
\end{equation}

\begin{proof}[Proof of Theorem \ref{theo:err}]
In the following we write $v(t):=u(t-\tau).$ By applying the triangle inequality to the Volterra series \cite[Lemma $A.1$]{BH18}, we have using delay Volterra kernels \eqref{eq:Voltker}
\begin{equation}
\begin{split}
\label{eq:firstesm}
&\left\lVert\Delta(K_1) \right\rVert_{L^2(0,T)} \le  \left\lVert \left\lVert \Delta(\indic_{[0,\infty)}h^{\operatorname{delay}}_{0} ) \right\rVert * \indic_{[0,T)} \left\lVert u \right\rVert \right\rVert_{L^2(0,\infty)} + \\
&\sum_{k=2}^{\infty}\Bigg(\int_0^{T} \Bigg(\int_{\Delta_k(t)} \Vert \Delta ( h^{\operatorname{delay}}_{k-1}(t-s_1,\dots,s_{k-1}-s_k) v(s_1)\cdots v(s_{k-1})\cdot u(s_k) \Vert  \ ds \Bigg)^2 \ dt \Bigg)^{1/2}. 
\end{split}
\end{equation}
Our aim is to estimate the expression containing delay Volterra kernels in terms of bilinear Volterra kernels \eqref{eq:Volt}.

The first term on the right-hand side of \eqref{eq:firstesm} satisfies by Young's inequality
\begin{equation}
\begin{split}
\label{eq:firstone}
\left\lVert \left\lVert \Delta(\indic_{[0,\infty)}h^{\operatorname{delay}}_{0})  \right\rVert * \indic_{[0,T)} \left\lVert u \right\rVert \right\rVert_{L^2(0,\infty)}  
&\le  \left\lVert \Delta(h^{\operatorname{delay}}_{0}) \right\rVert_{L^1(0,\infty)}  \left\lVert u \right\rVert_{L^2((0,T))}\\
&\le  \left\lVert \Delta(h_{0}) \right\rVert_{L^1(0,\infty)}  \left\lVert u \right\rVert_{L^2(0,T)}.
\end{split}
\end{equation}
To estimate the second term on the right-hand side, we observe that by Minkowski's inequality and H\"older's inequality for $k \ge 2$
\begin{equation}
\begin{split}
\label{eq:one}
&\Bigg(\int_0^{T} \Bigg(\int_{\Delta_k(t)} \Vert \Delta (h^{\operatorname{delay}}_{k-1}(t-s_1,\dots,s_{k-1}-s_k) ) v(s_1)\cdots u(s_k) \Vert  ds \Bigg)^2 \ dt \Bigg)^{1/2}\\
&\le \int_0^{T}  \Bigg(\int_{s_1}^{T}  \Bigg(\int_{\Delta_{k-1}(s_1)}  \Vert  \Delta (h^{\operatorname{delay}}_{k-1}(t-s_1,\dots,s_{k-1}-s_k)) v(s_2)\cdots u(s_k) \Vert \ ds \Bigg)^2 dt \Bigg)^{1/2}  \vert v(s_1) \vert \ ds_1  \\
&\le \sup_{0\le s_1\le T} \Bigg(\int_{0}^{T-s_1}  \Bigg(\int_{\Delta_{k-1}(s_1)} \Vert  \Delta (h^{\operatorname{delay}}_{k-1}(t,\dots,s_{k-1}-s_k) ) v(s_2)\cdots u(s_k) \Vert \ ds \Bigg)^2  \ dt \Bigg)^{1/2} \Vert v \Vert_{L^1(0,T)}  \\
&\le \Bigg(\int_{0}^{T} \sup_{0\le s_1\le T} \Bigg(\int_{\Delta_{k-1}(s_1)} \Vert  \Delta(h^{\operatorname{delay}}_{k-1}(t,s_1-s_2, \dots,s_{k-1}-s_k) ) v(s_2)\cdots u(s_k) \Vert \ ds \Bigg)^2  \ dt \Bigg)^{1/2} \Vert v \Vert_{L^1(0,T)}.
\end{split}
\end{equation}
Thus, by applying H\"older's inequality, to the inner integral we find that using $\Vert v \Vert_{L^2} \le 1$
\begin{equation*}
\begin{split}
&\int_{\Delta_{k-1}(s_1)} \Vert  \Delta(h^{\operatorname{delay}}_{k-1}(t,s_1-s_2,\dots,s_{k-1}-s_k) ) \ \vert v(s_2)\cdots u(s_k) \vert \Vert \ ds\\
&=\int_{0}^{s_1}\dots\int_0^{s_{k-1}}\left\lVert \Delta h^{\operatorname{delay}}_{k-1}(t,s_1-s_2,\dots,s_{k-1}-s_{k}) \ v(s_2) \cdots u(s_{p}) \right\rVert_{\mathcal L(\mathbb R^n, \mathbb R^m)} \ ds_{k}\cdots ds_2 \\
&\le  \left(\int_{(0,T)^{i-2}}\left(\int_{0}^{\infty}\left(\int_{(0,T)^{k-i}} \left\lVert \Delta h_{k-1}^{\operatorname{delay}}(t,s,r,q) \right\rVert^2_{\mathcal L(\mathbb R^n, \mathbb R^m)}  \ dq \right)^{\frac{1}{2}} \ dr \right)^2 \ ds  \right)^{\frac{1}{2}} \Vert u \Vert_{L^{\infty}(0,T)}
\end{split} 
\end{equation*}
Applying Minkowski's integral inequality to this expression, leads after a change of variables to remove the delay, together with \eqref{eq:one} to
\begin{equation*}
\begin{split}
&\Bigg(\int_0^{T} \Bigg(\int_{\Delta_k(t)} \Vert \Delta (h_{k-1}^{\operatorname{delay}}(t-s_1,\dots,s_{k-1}-s_k)B ) v(s_1)\cdots u(s_k) \Vert  ds \Bigg)^2 \ dt \Bigg)^{1/2}\\
&\le   \int_0^{\infty}  \left( \int_{(0,T)^{k-1}} \left\lVert \Delta h_{k-1}(q_1,\dots,q_{i-1},r,q_i,\dots,q_{k-1}) \right\rVert^2_{\mathcal L(\mathbb R^n, \mathbb R^m)}  dq  \right)^{\frac{1}{2}} \ dr  \Vert u \Vert_{L^{\infty}(0,T)}\Vert v \Vert_{L^{1}(0,T)}.
\end{split} 
\end{equation*}
Thus, we have together with \eqref{eq:firstone}
\begin{equation*}
\begin{split}
\Vert \Delta(K_1) \Vert_{L^2(0,T)} 
&\le \sum_{i=1}^{\infty} \left(\left\lVert \Delta(h _{2i-1}) \right\rVert_{L^{1}_iL^{2}_{2i-1}(\operatorname{HS})}+ \left\lVert \Delta( h _{2i-2}) \right\rVert_{L^{1}_iL^{2}_{2i}(\operatorname{HS})} \right)\cdot\\
&\qquad \operatorname{max} \Bigg\{\Vert u \Vert_{L^{2}(0,T)}\Vert u \Vert_{L^{\infty}(0,T)}\Vert u \Vert_{L^{1}(0,T)}\Bigg\} \\
&\le  4 \left\lVert \Delta(H^{\operatorname{bil}}) \right\rVert_{\text{TC}}\operatorname{max} \left\{\Vert u \Vert_{L^{2}(0,T)}, \Vert u \Vert_{L^{\infty}(0,T)}\Vert v \Vert_{L^{1}(0,T)}\right\}. 
 \end{split}
\end{equation*}

The difference $\Delta(K_2)$ can be bounded, in terms of $\textbf{w}$ as introduced in Theorem \ref{theo:err}, using the Cauchy-Schwarz inequality, and Minkowski's integral inequality as
 \begin{equation*}
\begin{split}
\left\lVert \Delta K_2 \right\rVert_{L^2(0,T)}&\le  \left\lVert \Delta(CT B_{\operatorname{in}})\textbf{w} \right\rVert_{L^2(0,T)} \\
&\quad + \sum_{i=1}^{\infty} \left\lVert \int_{\Delta_i(\cdot)}  \left\lVert  \Delta \left(h^{\operatorname{delay}}_{i,\operatorname{in}}(\cdot-s_1,\cdots,s_{i-1}-s_i,s_i)\right)(v)  \right\rVert v(s_1)\cdots v(s_i) \ ds \right\rVert_{L^2(0,T)} \\
&\le \left\lVert \Delta(CT B_{\operatorname{in}}) \right\rVert_{L^2(0,T)}\left\lVert \varphi_0 \right\rVert_K \\
&\quad +  \sum_{i=1}^{\infty} \int_0^T \left\lVert \Delta \left(h_{i,\operatorname{in}}(\cdots,s,\cdots)\right)   \right\rVert_{L^2((0,\infty)^{i},\mathcal H)} \ ds \ \left\lVert v \right\rVert_{L^{\infty}(0,T)}\left\lVert \varphi_0 \right\rVert_K.
\end{split}
\end{equation*}
The statement then follows from \eqref{eq:estmate} by inserting the estimate
\[ \left\lVert \Delta(CT B_{\operatorname{in}}) \right\rVert_{L^2(0,T)} \le \left\lVert \Delta(W_0G_0) \right\rVert_{\operatorname{HS}}\le \left\lVert \Delta(W_0G_0) \right\rVert_{\text{TC}} \] 
into the estimate on the difference of all Volterra kernels \eqref{eq:Volt}, cf. \cite[Lemma $4.2$]{BH18},
\begin{equation*}
\begin{split}
\left\lVert \Delta K_2 \right\rVert_{L^2(0,T)}& \le \Bigg(\left\lVert \Delta( h _{2i,\operatorname{in}}) \right\rVert_{L^{2}}+  \sum_{i=1}^{\infty} \left(\left\lVert \Delta(h _{2i-1,\operatorname{in}}) \right\rVert_{L^{1}_iL^{2}_{2i-1}(\operatorname{HS})}+ \left\lVert \Delta( h _{2i,\operatorname{in}}) \right\rVert_{L^{1}_iL^{2}_{2i-2}(\operatorname{HS})} \right) \Bigg) \\
&  \cdot \operatorname{max} \left\{1,\left\lVert v \right\rVert_{L^{\infty}(0,T)} \right\}\left\lVert \varphi_0 \right\rVert_K \le  4 \left\lVert \Delta(H^{\operatorname{bil}}) \right\rVert_{\text{TC}}  \operatorname{max} \left\{1,\left\lVert v \right\rVert_{L^{\infty}(0,T)} \right\} \left\lVert \varphi_0 \right\rVert_K. 
\end{split}
\end{equation*}
\end{proof}

\bigskip

We now explain how to extend the previous error bound to systems \eqref{eq:delay1} without a control function $v:$

\begin{rem}
By redefining 
\begin{equation}
    \begin{split}
    \label{eq:redef}
    \hat \Phi&= \sqrt{T_0} \Phi, \ \hat N = \sqrt{T_0}N,\text{ and }v(t)=T_0^{-1/2},
            \end{split}
\end{equation} 
\eqref{eq:delay3} on the time-interval $[0,T_0]$ becomes the uncontrolled delay system
\begin{equation}
\begin{split}
\label{eq:delay6}
\varphi^{\operatorname{bild'}}(t) &= A \varphi^{\operatorname{bild}}(t) + (\hat \Phi \varphi^{\operatorname{bild}})(t) + Bu(t), \ t\in (0,T_0] \\
\varphi^{\operatorname{bild}}(0)&= \varphi_0, \quad \varphi^{\operatorname{bild}}(\sigma)= 0, \text{ for } \sigma \in (-r,0)
\end{split}
\end{equation}
with $\Vert v \Vert_{L^2(0,T_0)}=1.$
\end{rem}

Hence, the following corollary follows straight from Theorem \ref{theo:err}

\begin{corr}
\label{corr:err}
Let $\mathcal{H}\simeq \mathbb{R}^m$ and consider the difference of two solutions to \eqref{eq:delay6} on a time interval $[0,T_0]$. If we then interpret these two equations as solutions to \eqref{eq:delay3} with redefined \eqref{eq:redef} and  $M\left\lVert \hat N \right\rVert /\sqrt{2\omega}<1$, such that the Volterra series converges \cite[Lemma $A.1$]{BH18}, then, for control functions $u \in L^2((0,T_0),\mathbb R^n)$, initial states $\varphi_0 = \sum_{i=1}^k \langle \textbf{w} , \widehat{e_i} \rangle \phi_i$ and $\widetilde{\varphi}_0:= \sum_{i=1}^k \langle  \textbf{w} , \widehat{e_i} \rangle \widetilde{\phi}_i$, and zero history function, it follows that 
\begin{equation}
\begin{split}
\label{eq:erresm3}
\left\lVert \Delta(C\varphi^{\operatorname{bild}}) \right\rVert_{L^2((0,\infty), \mathbb R^m)} &\le 4   \left\lVert \Delta(H^{\operatorname{bil}}) \right\rVert_{\operatorname{TC}} \Bigg(\left\lVert \varphi_0 \right\rVert_{X} \operatorname{max} \left\{ 1, T_0^{-1/2} \right\} \\
&\qquad +\operatorname{max} \left\{\Vert u \Vert_{L^{2}(0,T_0)}, \sqrt{T_0} \right\}\left\lVert u \right\rVert_{L^{\infty}(0,T_0)} \Bigg).
\end{split}
\end{equation}
\end{corr}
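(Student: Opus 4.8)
The plan is to derive Corollary \ref{corr:err} as a direct specialization of Theorem \ref{theo:err}, using the substitution recorded in the preceding Remark to turn the uncontrolled delay system \eqref{eq:delay6} into an instance of the controlled system \eqref{eq:delay3}. First I would fix a time horizon $T_0>0$ and set $v(t) \equiv T_0^{-1/2}$ on $[0,T_0]$, $\hat\Phi = \sqrt{T_0}\,\Phi$, $\hat N = \sqrt{T_0}\,N$. With this choice one checks that $(\hat\Phi\varphi)(t)\,v(t) = (\Phi\varphi)(t)$, so a solution of \eqref{eq:delay6} with the modified delay operator is literally a solution of \eqref{eq:delay3} with delay operator $\hat\Phi$ and control $v$. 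Moreover $\Vert v\Vert_{L^2(0,T_0)} = T_0^{-1/2}\cdot\sqrt{T_0} = 1$, so the hypothesis $\Vert v\Vert_{L^2(0,T_0)}\le 1$ of Theorem \ref{theo:err} is satisfied, and the stability condition becomes $M\Vert\hat N\Vert/\sqrt{2\omega}<1$ exactly as assumed in the corollary.

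Next I would simply invoke the conclusion \eqref{eq:erresm2} of Theorem \ref{theo:err} for this system and evaluate each norm of $v$ that appears. Since $v$ is the constant function $T_0^{-1/2}$ on $[0,T_0]$, we have $\Vert v\Vert_{L^\infty(0,T_0)} = T_0^{-1/2}$ and $\Vert v\Vert_{L^1(0,T_0)} = T_0^{-1/2}\cdot T_0 = \sqrt{T_0}$. Substituting these into the right-hand side of \eqref{eq:erresm2} gives
\[
\left\lVert \Delta(C\varphi^{\operatorname{bild}})\right\rVert_{L^2((0,\infty),\mathbb R^m)} \le 4\left\lVert\Delta(H^{\operatorname{bil}})\right\rVert_{\operatorname{TC}}\Bigl(\left\lVert\varphi_0\right\rVert_X\operatorname{max}\{1,T_0^{-1/2}\} + \operatorname{max}\{\Vert u\Vert_{L^2(0,T_0)},\sqrt{T_0}\}\,\left\lVert u\right\rVert_{L^\infty(0,T_0)}\Bigr),
\]
which is precisely \eqref{eq:erresm3}. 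The initial-state decomposition $\varphi_0 = \sum_i\langle\mathbf w,\widehat{e_i}\rangle\phi_i$ and its reduced counterpart $\widetilde\varphi_0$ carry over unchanged, since the substitution \eqref{eq:redef} does not touch $B_{\operatorname{in}}$ or the admissible input space $Y$.

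One point that deserves a sentence of care is that the Hankel operator $H^{\operatorname{bil}}$ and the associated Gramians in the corollary should be those built from the \emph{rescaled} data $(\hat N, \hat\Phi)$ rather than from $(N,\Phi)$; this is consistent because Theorem \ref{theo:err} always refers to the Hankel operator of whatever bilinear delay system is under consideration, and here that system is \eqref{eq:delay6} written in the form \eqref{eq:delay3}. The only mild obstacle, then, is bookkeeping: making sure that the rescaling is applied uniformly to $N$, $\Phi$ and the delay Volterra kernels so that the Volterra series \cite[Lemma $A.1$]{BH18} still converges — which it does precisely under $M\Vert\hat N\Vert/\sqrt{2\omega}<1$ — and that the $L^2((0,\infty))$ norm on the left is legitimate even though the rescaled system is only defined on $[0,T_0]$; this is handled exactly as in Theorem \ref{theo:err} by extending the control by zero past $T_0$ and using exponential stability of the (delayed) semigroup. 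With these remarks the corollary is immediate.
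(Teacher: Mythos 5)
Your proposal is correct and follows exactly the paper's route: the paper also obtains the corollary by the rescaling $\hat\Phi=\sqrt{T_0}\,\Phi$, $\hat N=\sqrt{T_0}\,N$, $v\equiv T_0^{-1/2}$ of the preceding remark, which makes \eqref{eq:delay6} an instance of \eqref{eq:delay3} with $\Vert v\Vert_{L^2(0,T_0)}=1$, and then substitutes $\Vert v\Vert_{L^\infty(0,T_0)}=T_0^{-1/2}$ and $\Vert v\Vert_{L^1(0,T_0)}=\sqrt{T_0}$ into \eqref{eq:erresm2}. Your added remarks about the Hankel operator being that of the rescaled system are consistent with (and slightly more explicit than) what the paper writes.
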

 
\section{Stochastic delay differential equations}
\label{sec:SDDE}
The balanced truncation theories of bilinear and stochastic systems with multiplicative noise have many features in common \cite{BH18}. In particular, the Gramians for both systems obey the same Lyapunov equations. We now demonstrate that the same is true for stochastic delay equations (SDDEs): Consider a probability space $(\Omega, \mathcal F, \mathbb P)$ with filtration $\mathcal F_t$ induced by a one-dimensional Brownian motion $(W(t))_t.$ We then study the stochastic delay differential equation (SDDE) for a process $X^{\operatorname{sdde}}_{t}$ taking values in $\mathbb R^d$ with matrices $A,N \in \mathbb R^{d \times d},\ B \in \mathbb R^{d \times n}$ and a control $u \in L^2_{\text{ad}}(\Omega \times (0,\infty); \mathbb R^n)$ which, cf. \eqref{eq:sdde}, is given by
\begin{equation}
\begin{split}
\label{eq:delay21}
dX^{\operatorname{sdde}}_t &= \left(A X^{\operatorname{sdde}}_{t}+ Bu(t)\right) \ dt +N X^{\operatorname{sdde}}_{t-\tau} \ dW_t, \ t>0 \\ 
X^{\operatorname{sdde}}_0&=\xi, \ X^{\operatorname{sdde}}_t = f_t \text{ for }-r\le t < 0.
\end{split}
\end{equation}
where $f_t$ is a $\mathcal F_0$-measurable $C([-r,0],\mathbb R)$-valued random variable for which the second moment exists uniformly on $[-r,0]$ and $\xi \in L^2(\Omega,\mathcal F_0)$. The solution to \eqref{eq:delay21} satisfies then
\begin{equation}
\label{eq:solution}
X^{\operatorname{sdde}}_t = T(t)\xi +  \int_0^t T(t-s)  NX^{\operatorname{sdde}}_{s-\tau} \ dW_s+ \int_0^t T(t-s)Bu(s) \ ds.
\end{equation}
We are interested in an output variable $CX_t$ where the observation operator $C$ is a matrix of appropriate size.
This identity shows that $X_t$ is a semi-martingale, since $X_t$ is also $(\mathcal F_t)$ adapted, has continuous paths for $t \ge 0,$ and satisfies for all $T>0$ the uniform square-integrability condition $\mathbb E  \left(\sup_{t \in (-r,T)} \left\lVert X_t \right\rVert^2 \right) < \infty.$ Moreover, $X_t$ is a $C^b$-Feller process \cite{BS17}.
In particular, we can define for any $C([-r,0])$-valued process $\Psi$ and initial condition $\xi \in L^2(\Omega,\mathcal F)$ as in \eqref{eq:sdde} the flow $\Phi^{\operatorname{sdde}}_{\Psi}(t)(\xi):=X_t^{\text{hom}}$ where $X_t^{\text{hom}}$ is the homogeneous part of \eqref{eq:sdde}, i.e.\@ the process $X^{\operatorname{sdde}}$ with $u \equiv 0.$ 
We record that once $\Psi \equiv 0$ the flow becomes linear in the initial state $\xi.$ We also write $\Phi^{\operatorname{sdde}}_{\Psi}(t,s)(\xi)$ to denote the process started at time $s$ from $\xi.$
It follows then directly from \eqref{eq:solution} that for $\Psi=0$, the solution to \eqref{eq:sdde} is given by the variation of constant formula 
\begin{equation}
\label{eq:VOCF}
X_t^{\operatorname{sdde}} =  \int_0^t \Phi^{\operatorname{sdde}}_0(t,r)   Bu(r) dr+\Phi^{\operatorname{sdde}}_0(t)(\xi).
\end{equation}
\begin{ass}
We assume the flow $\Phi^{\operatorname{sdde}}_{0}$ to be exponentially stable in mean square sense, i.e. there are $C,\kappa>0$ such that for all $x \in \mathbb R^m$ we have $ \mathbb E \left\lVert \Phi^{\operatorname{sdde}}_{0}(t,0)x \right\rVert^2 \le C e^{-\kappa t} \left\lVert x \right\rVert^2.$ 
\end{ass}
\begin{disc}[Exponential stability]
The exponential stability of SDEs and SDDEs has been thoroughly addressed and relevant results for our framework can for example be found in \cite{MS97}. In particular, \cite[Ex. $4.1$]{MS97} implies that the SDDE
\begin{equation}
dX_t = A X_t \ dt + \sum_{i=1}^l \left(A_i X_t + N_i X_{t-\tau} \right) dW_t^i 
\end{equation}
possesses an exponentially stable flow if there are two positive definite matrices $Q,G$ such that the Lyapunov equation holds
\begin{equation}
\label{eq:lapstab}
GA + A^T G + \sum_{i=1}^l \left( A_i + N_i \right)^T G \left(A_i+N_i \right) = -Q  
\end{equation}
and the delay satisfies the smallness condition $\tau < \frac{\sqrt{\delta_1^2+ \delta_3 \left\lVert A \right\rVert^2}-\delta_1}{2\left\lVert A \right\rVert^2 }$
where in terms of the smallest eigenvalue $\lambda_{\operatorname{min}}(T)$ of a matrix $T$ satisfies
\begin{equation}
\begin{split}
\delta_1&:=\sum_{i=1}^l \left(\left\lVert A_i \right\rVert^2 + \left\lVert N_i \right\rVert^2 \right), \quad 
\delta_2:=2 \left\lVert G \right\rVert  \sqrt{2\delta_1 \sum_{i=1}^l \left\lVert N_i \right\rVert^2} , \text{ and }\\
\delta_3&:=\left(\frac{\sqrt{\delta_2^2+4 \lambda_{\operatorname{min}}(Q) \left\lVert G \right\rVert \sum_{i=1}^l \left\lVert N_i \right\rVert^2 }-\delta_2}{2\left\lVert G \right\rVert\sum_{i=1}^l\left\lVert N_i \right\rVert^2}\right)^2.
\end{split}
\end{equation}
\end{disc}
\subsection{Gramians}
Let us now introduce the Gramians and the Hankel operator for the SDDE \eqref{eq:sdde}:
\begin{defi}
\label{def:gram}
The \emph{observability map} $W^{\operatorname{sdde}} \in \mathcal L( K, L^2(\Omega_{(0,\infty)}; \mathcal H))$ and \emph{reachability map} $R^{\operatorname{sdde}} \in \mathcal L(L^2(\Omega_{(0,\infty)}; \mathbb R^n) \oplus \mathbb R^k,K)$ are defined as 
\[(W^{\operatorname{sdde}}x)(t)= C\Phi_{0}^{\operatorname{sdde}}(t)x \text{ and } R^{\operatorname{sdde}}(f,v)= \mathbb E\int_0^{\infty} \Phi^{\operatorname{sdde}}_{0}(s)Bf(s) \ ds +B_{\operatorname{in}}v . \]
The \emph{Hankel operator} is defined as $H^{\operatorname{sdde}}=W^{\operatorname{sdde}}R^{\operatorname{sdde}}.$
The \emph{stochastic reachability Gramian} is defined as $\mathscr P^{\operatorname{sdde}}= R^{\operatorname{sdde}}R^{\operatorname{sdde}*}$ where
\[\mathscr P^{\operatorname{sdde}}=\mathbb E \int_0^{\infty} (\Phi^{\operatorname{sdde}}_{0}(s)B)(\Phi^{\operatorname{sdde}}_{0}(s)B)^T \ ds + B_{\operatorname{in}}B_{\operatorname{in}}^*.\]
The \emph{observability Gramian} is defined as $\mathscr O^{\operatorname{sdde}}= W^{\operatorname{sdde}*}W^{\operatorname{sdde}}$ where 
\[\mathscr O^{\operatorname{sdde}} = \mathbb E \int_0^{\infty} \Phi^{\operatorname{sdde}}_{0}(s)^T C^T C \Phi^{\operatorname{sdde}}_{0}(s) \ ds.\]
\end{defi}

\begin{prop}
The observability and reachability Gramians satisfy the following Lyapunov equations
\begin{equation}
\begin{split}
\label{eq:firLyap}
&BB^T +(\mathscr P^{\operatorname{sdde}}-B_{\operatorname{in}}B_{\operatorname{in}}^T) A^T+ A  (\mathscr P^{\operatorname{sdde}}-B_{\operatorname{in}}B_{\operatorname{in}}^T)+  N(\mathscr P^{\operatorname{sdde}}-B_{\operatorname{in}}B_{\operatorname{in}}^T)  N^T=0 \\
& \text{ and }\quad C^TC +N^T\mathscr O^{\operatorname{sdde}} \ N+\mathscr O^{\operatorname{sdde}} A+ A^T \mathscr O^{\operatorname{sdde}}=0.
\end{split}
\end{equation}
\end{prop}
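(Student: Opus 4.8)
The plan is to show that the two Gramians of \eqref{eq:delay21} are identical to the (non-delayed) \emph{bilinear} Gramians of \cite{BH18} associated with the coefficient matrices $A,N,C,B$, so that the Lyapunov equations \eqref{eq:firLyap} are inherited from the bilinear Lyapunov equations \eqref{eq:Lyap0} and \eqref{Lyap1} (which, for the bounded matrices at hand, hold as genuine operator identities). The tool is the It\^o--Wiener chaos expansion of the homogeneous flow $X^x_t:=\Phi^{\operatorname{sdde}}_0(t)(x)$ — the solution of \eqref{eq:delay21} with $u\equiv 0$, $X^x_0=x$ and zero history — combined with a change of variables that scales the delay $\tau$ away. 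Concretely, I would iterate the mild identity \eqref{eq:solution} in the homogeneous case, $X^x_t=T(t)x+\int_0^tT(t-s)NX^x_{s-\tau}\,dW_s$; since the history vanishes one has $X^x_{s-\tau}=0$ for $s<\tau$, and iterating produces $X^x_t=\sum_{k\ge 0}I^{(t)}_k(x)$ where
\[
I^{(t)}_k(x)=\int\cdots\int_{\{\tau\le s_k,\ s_j+\tau\le s_{j-1}\ (2\le j\le k),\ s_1\le t\}} T(t-s_1)NT(s_1-\tau-s_2)N\cdots NT(s_k-\tau)\,x\;dW_{s_k}\cdots dW_{s_1}
\]
is a $k$-fold iterated It\^o integral with deterministic kernel.

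Multiple integrals of distinct orders are orthogonal in $L^2(\Omega)$, hence $\mathbb E\langle CX^x_t,CX^y_t\rangle=\sum_k\mathbb E\langle CI^{(t)}_k(x),CI^{(t)}_k(y)\rangle$, and by the It\^o isometry each summand is a deterministic integral over the above simplex. The substitution $u_1=t-s_1$, $u_j=s_{j-1}-\tau-s_j$ for $2\le j\le k$, $u_{k+1}=s_k-\tau$ is a measure-preserving bijection onto $(0,\infty)^{k+1}$ (its inverse is linear with unit Jacobian) that removes $\tau$ completely, turning the kernel into $CT(u_1)NT(u_2)\cdots NT(u_{k+1})$; thus
\[
\int_0^\infty\mathbb E\langle CI^{(t)}_k(x),CI^{(t)}_k(y)\rangle\,dt=\int_{(0,\infty)^{k+1}}\langle O_k(u)x,O_k(u)y\rangle_{\mathcal H}\,du=\langle x,\mathscr O_ky\rangle
\]
with $O_k,\mathscr O_k$ exactly as in Definition \ref{Gramians}. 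Summing over $k$ — all terms being positive semidefinite so that monotone convergence applies, and the series being finite because $\int_0^\infty\mathbb E\|X^x_t\|^2\,dt<\infty$ by the exponential mean-square stability assumption — gives $\mathscr O^{\operatorname{sdde}}=\sum_k\mathscr O_k=\mathscr O^{\operatorname{bil}}$, so the second line of \eqref{eq:firLyap} is precisely the observability Lyapunov equation \eqref{eq:Lyap0}.

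The reachability Gramian is treated the same way: applying the chaos expansion to the columns of $\Phi^{\operatorname{sdde}}_0(\cdot)B$, the It\^o isometry together with the same delay-killing substitution identifies $\mathscr P^{\operatorname{sdde}}-B_{\operatorname{in}}B_{\operatorname{in}}^T=\mathbb E\int_0^\infty(\Phi^{\operatorname{sdde}}_0(s)B)(\Phi^{\operatorname{sdde}}_0(s)B)^T\,ds$ with the bilinear reachability Gramian built from $B$ alone (the operator $\mathscr P^{\operatorname{bil}}_a$ in the proof of the bilinear Lyapunov lemma), which satisfies \eqref{Lyap1} with $B_{\operatorname{in}}=0$, cf.\ \cite[Lemma $2.2$]{BH18}; this is the first line of \eqref{eq:firLyap}. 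Alternatively, one obtains it directly from It\^o's formula applied to $(\Phi^{\operatorname{sdde}}_0(t)B)(\Phi^{\operatorname{sdde}}_0(t)B)^T$: writing $Q(t):=\mathbb E[(\Phi^{\operatorname{sdde}}_0(t)B)(\Phi^{\operatorname{sdde}}_0(t)B)^T]$ and using that the (vanishing) history contributes nothing on $[0,\tau]$ yields $Q(t)=BB^T+\int_0^t(AQ(s)+Q(s)A^T)\,ds+\int_\tau^tNQ(s-\tau)N^T\,ds$; letting $t\to\infty$, with $Q(t)\to 0$ and $Q\in L^1((0,\infty))$ from mean-square stability and with $\int_\tau^\infty Q(s-\tau)\,ds=\int_0^\infty Q(s)\,ds=\mathscr P^{\operatorname{sdde}}-B_{\operatorname{in}}B_{\operatorname{in}}^T$, gives $BB^T+A(\mathscr P^{\operatorname{sdde}}-B_{\operatorname{in}}B_{\operatorname{in}}^T)+(\mathscr P^{\operatorname{sdde}}-B_{\operatorname{in}}B_{\operatorname{in}}^T)A^T+N(\mathscr P^{\operatorname{sdde}}-B_{\operatorname{in}}B_{\operatorname{in}}^T)N^T=0$.

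The main obstacle is the observability equation. A direct approach differentiating $\mathbb E[(\Phi^{\operatorname{sdde}}_0(t,0))^TM\,\Phi^{\operatorname{sdde}}_0(t,0)]$ by It\^o's formula does not close, since the random flow does not commute with $A$ under the expectation; the chaos expansion is precisely what bypasses this, reducing the quadratic form defining $\mathscr O^{\operatorname{sdde}}$ to deterministic iterated semigroup integrals in which the delay can be absorbed. The remaining points are routine: the $L^2(\Omega)$-convergence of the Volterra iteration (a Gr\"onwall estimate based on $\|T(t)\|\le Me^{-\omega t}$ and mean-square stability), the legitimacy of exchanging the chaos sum with the time integral (positivity and monotone convergence), and checking that the substitution above is indeed a measure-preserving bijection onto the whole orthant $(0,\infty)^{k+1}$.
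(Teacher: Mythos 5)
Your proof is correct in substance, but it takes a genuinely different route from the paper for the observability equation, while your ``alternative'' derivation of the reachability equation is essentially the paper's own proof. The paper establishes both identities by stochastic integration by parts: for reachability it is exactly your It\^o-product argument (the identity \eqref{eq:SIBP} with the columns of $B$ as initial states, the limit $t\to\infty$, and the shift $s'=s-\tau$, legitimate because the history vanishes), and for observability it applies the same product rule to $\Phi_0^{\operatorname{sdde}}(t)^T X\,\Phi_0^{\operatorname{sdde}}(t)$ with $X$ a fixed solution of $C^TC+N^TXN+XA+A^TX=0$: after the same delay shift, the integrands in \eqref{eq:computation} collapse to $-\Phi_0^{\operatorname{sdde}}(s)^TC^TC\,\Phi_0^{\operatorname{sdde}}(s)$, and letting $t\to\infty$ yields $X=\mathscr O^{\operatorname{sdde}}$ by Definition \ref{def:gram}. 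So your claim that the direct It\^o approach ``does not close'' is not accurate: one never needs a closed ODE for the second moment, only the integrated identity tested against a Lyapunov solution, whose existence is available since $A$ is Hurwitz under the mean-square stability assumption (take expectations of the homogeneous equation). Your Wiener-chaos route instead identifies $\mathscr O^{\operatorname{sdde}}$ and $\mathscr P^{\operatorname{sdde}}-B_{\operatorname{in}}B_{\operatorname{in}}^T$ with the bilinear Gramians of Definition \ref{Gramians} via the It\^o isometry and the delay-removing substitution, and then imports \eqref{eq:Lyap0} and \eqref{Lyap1}; this is valid and in fact proves somewhat more (the coincidence of the stochastic-delay and bilinear-delay Gramians, which the paper only asserts informally), at the price of the convergence bookkeeping you list and of one caveat worth making explicit: the bilinear Lyapunov lemma is normally stated under summability of $\sum_k \mathscr O_k$ (the smallness condition $M\lVert N\rVert/\sqrt{2\omega}<1$ of \cite{BH18}), which the present proposition does not assume. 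Your own orthogonality identity supplies the needed summability, since $\sum_k\langle x,\mathscr O_k x\rangle=\langle x,\mathscr O^{\operatorname{sdde}}x\rangle<\infty$ by mean-square exponential stability, after which the term-by-term Lyapunov recursions can be summed in this finite-dimensional setting; with that remark added, your argument is complete, though less direct than the paper's.
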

\begin{proof}
Stochastic integration by parts of the process $X_t = \Phi^{\operatorname{sdde}}_0(t)\xi$ brings
\begin{equation}
\begin{split}
\label{eq:SIBP}
\mathbb E\left(X_t X_t^T \right) &= \mathbb E(\xi\xi^T)+\int_0^t \mathbb E\left(X_s X_s^T \right) \ ds \ A^T+ A \int_0^t \mathbb E\left(X_s X_s^T \right) \ ds\\
& \quad +N \int_0^t \mathbb E\left(X_{s-\tau} X_{s-\tau}^T \right) \ ds \ N^T.
\end{split}
\end{equation}
We then take the initial state $x_0=\psi_i$ with $\psi_i$ as in $Bu =\sum_{i=1}^n \psi_i u_i$, pass to the limit $t \rightarrow \infty$, and perform a simple change of variables $s':=s-\tau$ in the last integral.
This immediately yields \eqref{eq:firLyap}.
Let us now find a Lyapunov equation for the observability Gramian. Stochastic integration by parts shows that 
\begin{equation}
\begin{split}
\label{eq:computation}
&\mathbb E\left(\Phi^{\operatorname{sdde}}_{0}(t) \mathscr O^{\operatorname{sdde}} \Phi^{\operatorname{sdde}}_{0}(t) \right) = \mathscr O^{\operatorname{sdde}} + \mathbb E \left( \int_0^t \Phi^{\operatorname{sdde}}_{0}(s)^T \mathscr O^{\operatorname{sdde}} A \Phi^{\operatorname{sdde}}_{0}(s) \ ds \right)\\
&+ \mathbb E \left( \int_0^t \Phi^{\operatorname{sdde}}_{0}(s)^T A^T \mathscr O^{\operatorname{sdde}}  \Phi^{\operatorname{sdde}}_{0}(s) \ ds \right) + \mathbb E \left( \int_0^t \Phi^{\operatorname{sdde}}_{0}(s-\tau)^T N^T\mathscr O^{\operatorname{sdde}} N\Phi^{\operatorname{sdde}}_{0}(s-\tau) \right) \ ds \\
&= \mathscr O^{\operatorname{sdde}} + \mathbb E \left( \int_0^t \Phi^{\operatorname{sdde}}_{0}(s)^T \mathscr O^{\operatorname{sdde}} A \Phi^{\operatorname{sdde}}_{0}(s) \ ds \right) + \mathbb E \left( \int_0^t \Phi^{\operatorname{sdde}}_{0}(s)^T A^T \mathscr O^{\operatorname{sdde}}  \Phi^{\operatorname{sdde}}_{0}(s) \ ds \right)  \\
& \quad + \mathbb E \left( \int_0^t \Phi^{\operatorname{sdde}}_{0}(s)^T N^T\mathscr O^{\operatorname{sdde}} N\Phi^{\operatorname{sdde}}_{0}(s) \right) \ ds.
\end{split}
\end{equation}
Thus, if $X$ satisfies the Lyapunov equation $C^TC +N^T X \ N+X A+ A^T X=0$, we find by \eqref{eq:computation} and the definition of the observability Gramian, Definition \ref{def:gram}, that indeed $\mathscr O^{\operatorname{sdde}}=X.$ 
\end{proof}

\begin{rem}
The proof of the Lyapunov equations, in particular \eqref{eq:SIBP} and \eqref{eq:computation}, show that for non-constant history function $f_t$, the Lyapunov equations generalize to
\begin{equation}
\begin{split}
&BB^T+ \int_{-\tau}^0 N\mathbb E(f_t  f_t^*)N^T \ dt +(\mathscr P^{\operatorname{sdde}}-B_{\operatorname{in}}B_{\operatorname{in}}^T) A^T+ A  (\mathscr P^{\operatorname{sdde}}-B_{\operatorname{in}}B_{\operatorname{in}}^T)\\
&+  N(\mathscr P^{\operatorname{sdde}}-B_{\operatorname{in}}B_{\operatorname{in}}^T)  N^T=0 \quad\text{ and }  \\
&C^TC +N^T\mathscr O^{\operatorname{sdde}} \ N+ \int_{-\tau}^0\mathbb E(f_t N^T \mathscr O^{\operatorname{sdde}} N f_t^*) \ dt +\mathscr O^{\operatorname{sdde}} A+ A^T \mathscr O^{\operatorname{sdde}}=0.
\end{split}
\end{equation}

\end{rem}

In addition, the observability Gramian defines the $L^2$ energy of the uncontrolled process, i.e. with $u \equiv 0:$ 
\[ \mathcal E_{\operatorname{output}}:=\int_0^{\infty} \mathbb E\left \lVert C\Phi_0^{\operatorname{sdde}}(t,0) x \right\rVert^2 \ dt = \langle \mathscr O^{\operatorname{sdde}}x,x \rangle.\]
For the reachability Gramian less direct interpretations, already studied in the context of stochastic system with multiplicative, cf. \cite{BH18}, can be stated for stochastic delay equations, as studied here, too.

\bigskip

\begin{lemm} 
\label{lemmasdde}
The difference of Hankel operators $\Delta(H^{\text{sdde}})$ satisfies for two independent Wiener processes \begin{equation}
\begin{split}
\label{eq:estmsdde}
 \left\lVert \Delta \left(C\Phi^{\operatorname{sdde}} B_{\operatorname{in}}\right)\right\rVert_{L^2(\Omega_{(0,\infty)},\operatorname{HS}(\mathbb R^k, \mathbb R^m))} &\le  \left\lVert \Delta\left(H^{\operatorname{sdde}}\right) \right\rVert_{\operatorname{HS}} \text{ and }\\
\left\lVert \Delta \left(C\Phi^{\operatorname{sdde}} B\right) \right\rVert_{L^1_tL^2_{\omega}(\Omega_{(0,\infty)},\operatorname{HS}(\mathbb R^n, \mathbb R^m))} &\le 2\left\lVert \Delta \left(H^{\operatorname{sdde}} \right) \right\rVert_{\operatorname{TC}}.
\end{split}
\end{equation}
\end{lemm}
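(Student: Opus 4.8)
The plan is to reduce both inequalities, via the block structure of the reachability map $R^{\operatorname{sdde}}$ of Definition \ref{def:gram} and the ideal property of the trace and Hilbert--Schmidt classes, to estimates for a \emph{single} SDDE \eqref{eq:sdde}, and then to apply these to the error system \eqref{eq:compsys}: for two independent Wiener processes (the standing hypothesis) the Hankel operator of \eqref{eq:compsys} is, by the unitary equivalences recorded in Section \ref{sec:BTinNut}, the object denoted $\Delta(H^{\operatorname{sdde}})$, and likewise the two kernels $\Delta(C\Phi^{\operatorname{sdde}}B_{\operatorname{in}})$, $\Delta(C\Phi^{\operatorname{sdde}}B)$ are those of \eqref{eq:compsys}; so it suffices to prove the two bounds with $H^{\operatorname{sdde}}$, $C\Phi^{\operatorname{sdde}}B_{\operatorname{in}}$, $C\Phi^{\operatorname{sdde}}B$ in place of $\Delta(H^{\operatorname{sdde}})$, $\Delta(C\Phi^{\operatorname{sdde}}B_{\operatorname{in}})$, $\Delta(C\Phi^{\operatorname{sdde}}B)$. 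Split $R^{\operatorname{sdde}}=R_B\oplus R_{\operatorname{in}}$ along the two summands of its domain, with $R_{\operatorname{in}}v=B_{\operatorname{in}}v$ and $R_Bf=\mathbb E\int_0^\infty\Phi^{\operatorname{sdde}}_0(s)Bf(s)\,ds$. Then $W^{\operatorname{sdde}}R_{\operatorname{in}}$ and $W^{\operatorname{sdde}}R_B$ are the restrictions of $H^{\operatorname{sdde}}=W^{\operatorname{sdde}}R^{\operatorname{sdde}}$ to these summands, so, since the trace and Hilbert--Schmidt classes are operator ideals, $\|W^{\operatorname{sdde}}R_{\operatorname{in}}\|_{\operatorname{HS}}\le\|H^{\operatorname{sdde}}\|_{\operatorname{HS}}$ and $\|W^{\operatorname{sdde}}R_B\|_{\operatorname{TC}}\le\|H^{\operatorname{sdde}}\|_{\operatorname{TC}}$.

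For the first estimate, note $(W^{\operatorname{sdde}}R_{\operatorname{in}}v)(t)=C\Phi^{\operatorname{sdde}}_0(t)B_{\operatorname{in}}v$; evaluating the Hilbert--Schmidt norm on an orthonormal basis of $\mathbb R^k$ and interchanging the finite sum with $\mathbb E\int_0^\infty(\cdot)\,dt$ by Tonelli gives $\|W^{\operatorname{sdde}}R_{\operatorname{in}}\|_{\operatorname{HS}}=\|C\Phi^{\operatorname{sdde}}B_{\operatorname{in}}\|_{L^2(\Omega_{(0,\infty)};\operatorname{HS}(\mathbb R^k,\mathbb R^m))}$, which together with the previous paragraph is the first bound.

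For the second estimate, iterate the variation-of-constants formula \eqref{eq:solution} with $u\equiv 0$ and zero history: $\Phi^{\operatorname{sdde}}_0(t)$ admits a stochastic Volterra expansion $\Phi^{\operatorname{sdde}}_0(t)=\sum_{m\ge0}\Phi^{(m)}_0(t)$, where $\Phi^{(m)}_0(t)$ is an $m$-fold iterated It\^o integral whose deterministic kernel is a product of the semigroup $T$, of $N$ and of the delay shift $\tau$, exactly as in the proof of Lemma \ref{kernels1}; under the standing mean-square stability together with $M\|N\|/\sqrt{2\omega}<1$, the series converges in $L^2(\Omega)$ uniformly on compact time intervals, cf. \cite[Lemma $A.1$]{BH18}, and distinct summands lie in orthogonal Wiener chaoses. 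Writing $h^{\operatorname{sdde}}_m$ for the associated level-$m$ Volterra kernel, the It\^o isometry gives $\|C\Phi^{\operatorname{sdde}}_0(t)B\|^2_{L^2(\Omega;\operatorname{HS})}=\sum_{m\ge0}\|h^{\operatorname{sdde}}_m(t,\cdot)\|^2_{L^2_m(\operatorname{HS})}$ for each $t$, so integrating in $t$ and using $\|\cdot\|_{\ell^2}\le\|\cdot\|_{\ell^1}$ together with Tonelli yields $\|C\Phi^{\operatorname{sdde}}B\|_{L^1_tL^2_\omega}\le\sum_{m\ge0}\|h^{\operatorname{sdde}}_m\|_{L^1_0L^2_m(\operatorname{HS})}$. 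Finally, as in the chain of inequalities in the proof of Theorem \ref{theo:err} --- a split of the kernel at an intermediate time, the Cauchy--Schwarz inequality, and a change of variables removing the delay as in Lemma \ref{kernels1} --- each $\|h^{\operatorname{sdde}}_m\|_{L^1_0L^2_m(\operatorname{HS})}$ is dominated by a trace-norm contribution of the decomposition $W^{\operatorname{sdde}}R_B=\sum_m W_mF_m$ of $W^{\operatorname{sdde}}R_B$ into Wiener-chaos levels, and evaluating the trace-norm formula \eqref{tracenorm} on block-diagonal bases gives $\sum_m\|W_mF_m\|_{\operatorname{TC}}\le 2\|H^{\operatorname{sdde}}\|_{\operatorname{TC}}$, exactly as in \eqref{eq:estmate}; this establishes the second bound.

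The decisive point is this last step: one must justify the chaos expansion of $\Phi^{\operatorname{sdde}}_0$ \emph{in the mixed $L^1_tL^2_\omega$ norm}, not merely in $L^2(\Omega_{(0,\infty)})$, which forces the delay shifts $s_i\mapsto s_i-\tau$ to be carried through the change of variables \emph{before} Minkowski's and the Cauchy--Schwarz inequalities are applied, and one must check that the level-$m$ factorisation of $W^{\operatorname{sdde}}R_B$ into $(W_m,F_m)$ is compatible with $\mathscr O^{\operatorname{sdde}}=\sum_m W_m^*W_m$ and $R_BR_B^*=\sum_m F_mF_m^*$, so that the combinatorics behind \eqref{eq:estmate} transfer verbatim. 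Unlike the non-delayed stochastic case treated in \cite{BH18}, the flow $\Phi^{\operatorname{sdde}}_0$ is not a genuine cocycle, so this compatibility has to be verified on the iterated-integral kernels directly, rather than through a semigroup composition.
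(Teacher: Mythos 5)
Your reduction to the (two--channel, independent--noise) error system and your treatment of the first inequality are fine and coincide with the paper's argument: the paper disposes of the first bound exactly by your Hilbert--Schmidt computation plus the ideal property. For the second inequality the paper offers no details at all --- it only points to the proof of \cite[Theorem 3, (5.11)]{BH18}, which in the non-delayed case rests on the cocycle property of the flow --- so the substance of your proposal is whether your chaos-expansion adaptation closes that argument. As written, it does not.

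The gap is in the decisive step you yourself flag. The It\^o isometry gives, for the level-$m$ term, $\|C\Phi_0^{(m)}(t)B\|^2_{L^2_\omega(\operatorname{HS})}=\int \|CT(t-s_1)NT(s_1-\tau-s_2)\cdots NT(s_m-\tau)B\|^2_{\operatorname{HS}}\,ds$ over the ordered jump times, so after $\ell^2\le\ell^1$ you must control an $L^1$ norm \emph{in the observation time} $t$ of an $L^2$ norm over the jump times; in gap variables this is the $L^1$-in-total-time of the $L^2$-over-simplex-slices norm of the bilinear kernel $h_m$ of \eqref{eq:Volt}. This is not one of the mixed norms $L^1_iL^2_{k-1}$ of \eqref{eq: mix} which the chain of inequalities in the proof of Theorem \ref{theo:err}, \cite[Lemma 4.2]{BH18} and the block estimates \eqref{eq:estmate} control: there the $L^1$ sits on a \emph{single} kernel argument (it is the control $v\in L^1$ that puts it there), and the two kinds of norms are not mutually dominating. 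Already for a rank-one kernel $a(u_0)b(u_1)$ the slice norm $\int_0^\infty\big((|a|^2*|b|^2)(T)\big)^{1/2}dT$ is not bounded by $\|a\|_{L^1}\|b\|_{L^2}$ (take $a=\indic_{[0,\epsilon]}$, $b=\indic_{[0,1]}$; the ratio blows up like $\epsilon^{-1/2}$), and mixed $L^1_tL^2$ norms are likewise not preserved by the delay-removing change of variables, since that shifts $t$ by an amount depending on the $L^2$ variables. Moreover $W^{\operatorname{sdde}}R_B=\sum_m W_mF_m$ is not an identity: with $W_m$, $F_m$ the chaos-level components one has $W^{\operatorname{sdde}}R_B=\sum_{j,j'}W_jF_{j'}$ and the cross blocks survive; pinching does give $\sum_m\|W_mF_m\|_{\operatorname{TC}}\le\|H^{\operatorname{sdde}}\|_{\operatorname{TC}}$, but the kernels $h_m^{\operatorname{sdde}}$ are not dominated by those diagonal blocks through the quoted deterministic chain. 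What the cited proof actually does, and what must be reproduced here with a substitute for the missing cocycle property (this is the adaptation the paper leaves implicit), is to split at \emph{half the observation time}, pair the two halves with the full observability and reachability maps, and run the Cauchy--Schwarz/SVD argument on $H^{\operatorname{sdde}}$ itself; the factor $2$ is precisely the Jacobian of $t\mapsto 2t$ in the outer $L^1_t$ integral. Your chaos expansion is a reasonable replacement for the cocycle property, but then the split has to be made at $t/2$ inside each iterated integral (decomposing according to which inter-jump gap contains $t/2$, with care where the cut falls into a $\tau$-shift) and the pieces resummed into $W^{\operatorname{sdde}}$ and an $R$-type map --- not carried out level-by-level ``exactly as in \eqref{eq:estmate}'', which is where your argument breaks.
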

\begin{proof}
The first bound follows immediately from the definition of the Hilbert-Schmidt norm. The second bound follows along the lines of the proof of \cite[Theorem $3$, (5.11)]{BH18}.
\end{proof}

We are now ready to give the proof of the error bound for stochastic delay differential equations:

\begin{proof}[Proof of Theorem \ref{theo:sdde}]
By Young's inequality we have for $v \in \mathbb R^k$ as in the statement of the theorem that $\left\lVert v \right\rVert_{\mathbb R^k}= \left\lVert \xi \right\rVert_{L^2(\Omega,K)}$ and 
\begin{equation*}
\begin{split}
  &\left\lVert \Delta \left(CX^{\operatorname{sdde}} \right) \right\rVert_{L^2(\Omega_{(0,T)},\mathbb R^m)} \\
  &\overset{\eqref{eq:VOCF}}{\le}  \left\lVert \Delta(C\Phi^{\operatorname{sdde}} B_{\operatorname{in}})(v)\right\rVert_{L^2(\Omega_{(0,T)},\mathbb R^m)} + \left\lVert \left\lVert \Delta \left(\indic_{[0,\infty)}C \Phi^{\operatorname{sdde}} B \right) \right\rVert * \indic_{[0,T)} \left\lVert u \right\rVert \right\rVert_{L^2(\Omega_{(0,T)},\mathbb R^m)}  \\
 &\le \left\lVert \Delta(C \Phi^{\operatorname{sdde}} B_{\operatorname{in}})\right\rVert_{L^2(\Omega_{(0,\infty)},\mathbb R^{m \times k})}  \left\lVert \xi \right\rVert_{L^{2}(\Omega;K)} \\
 & \quad +  \left\lVert \Delta \left(C \Phi^{\operatorname{sdde}} B\right) \right\rVert_{L^1_tL^2_{\omega}(\Omega_{(0,\infty)},\mathbb R^{m \times n})}  \left\lVert u \right\rVert_{L^{\infty}_{\omega}L^2_t(\Omega_{(0,T)},\mathbb R^n)} \\
 &\overset{\operatorname{Lemma} \ \ref{lemmasdde}}{\le}  \left\lVert \Delta(H^{\operatorname{sdde}}) \right\rVert_{\operatorname{TC}} \left(\left\lVert \xi \right\rVert_{L^{2}(\Omega;K)} + 2\left\lVert u \right\rVert_{L^{\infty}_{\omega}L^2_t(\Omega_{(0,T)},\mathbb R^n)} \right).
 \end{split}
 \end{equation*} 
 \end{proof}

\bigskip
\bigskip 

\section{Applications and Examples}
\label{sec:Examples}

We conclude by analyzing three applications of the model order reduction methods studied in this article -- the corresponding code can be found at {\url{https://github.com/lorenzrichter/balanced-truncation}.} 
\label{sec:examples}
\smallsection{Example: \ Linearized Stuart-Landau Oscillator} \cite{PYPT,WYH,ZZ} 
The Stuart-Landau oscillator system is a coupled nonlinear network and a model for phenomena such as chaos or synchronization in large physical or biological systems modeling a finite speed of propagation. 
The \emph{dissipative} system is described by a parameter $\Re(\alpha)<0$ and has an equilibrium solution $\varphi=0$, i.e.\@ all oscillators at rest. The dynamics of the Stuart-Landau system with unidirectional nearest-neighbor interaction is described by the following coupled system of nonlinear differential equations
\begin{equation}
\begin{split}
\label{eq:dynamics}
\varphi_j'(t) = \alpha \varphi_j(t) - \varphi_j(t) \vert \varphi_j(t) \vert^2 + \varphi_{(j+1) \bmod {N}} (t-\tau), \ j=1,\dots, N.
\end{split}
\end{equation}
with $\tau>0$. 
By linearizing the dynamics \eqref{eq:dynamics} around the equilibrium solution, we obtain the following coupled system of delay equations
\begin{equation}
\begin{split}
\varphi_j'(t) = \alpha \varphi_j(t)+ \varphi_{(j+1) \bmod {N}}(t-\tau), \ j=1,\dots, N.
\end{split}
\end{equation}
Hence, $A=\alpha \indic$ and therefore $T(t)=e^{\alpha \indic t}$ and thus $M=1$ and $\omega=\vert \Re(\alpha) \vert.$ In a numerical example, let us consider $d = 50$ and take $\alpha = -1.2, T = 2, \tau = 0.1$. We discretize the dynamics with a simple forward Euler scheme using the stepsize $\Delta t = 0.01$ and a random initial point $\varphi(0) \sim \mathcal{N}(0, \sqrt{0.5}\indic)$. We choose a history function $f=0$ and do balanced truncation as described above while varying the dimensions of the corresponding reduced systems. Figure \ref{fig: stuart-landau} displays the $L^2$ errors when reducing to $r$-dimensional systems and compares to the corresponding bounds from Corollary \ref{corr:err}. We see that the measured $L^2$ error decays rapidly and that the bound seems to be rather conservative. The two right plots in the same figure show some components of the full and the reduced systems when either choosing $r=2$ or $r=6$. In the latter case we see almost full agreement of the full and reduced trajectories.

\begin{figure}
\centering
\includegraphics[width=1.0\linewidth]{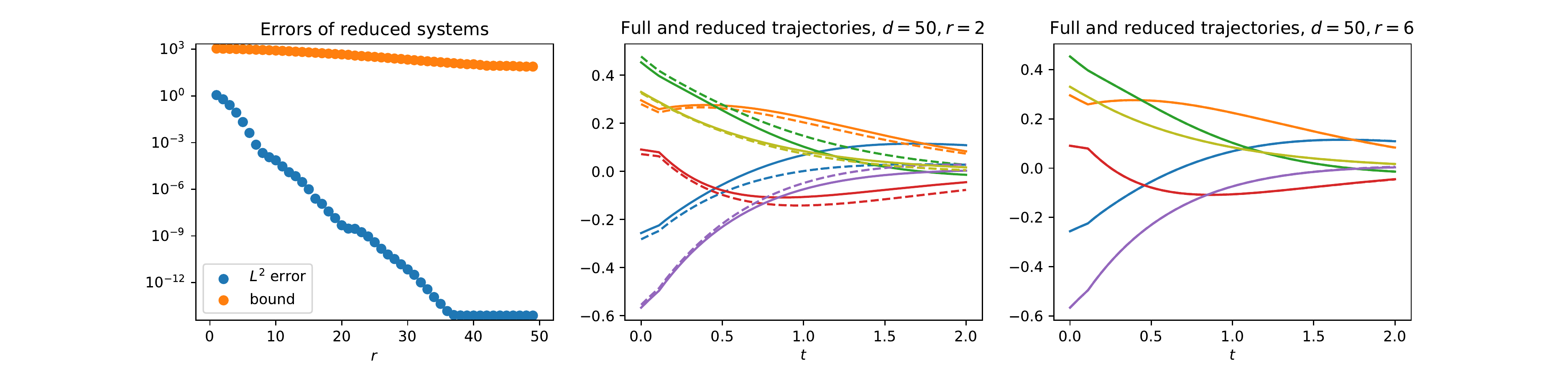}
\caption{Balanced truncation of the Stuart-Landau oscillator with delay. We display the $L^2$ errors with their corresponding bounds as well as some components of the full (solid lines) and reduced (dashed lines) trajectories.}
\label{fig: stuart-landau}
\end{figure}

\smallsection{Example: \ Generalized Langevin equation} \cite{K,LLL,M}
Next, we consider a collection of particles obeying a damped Newtonian dynamics
\begin{equation}
\label{eq:Hamdyn}
 M \ddot{x}(t) + C \dot{x}(t)  + K x(t) = 0. 
 \end{equation}
In \eqref{eq:Hamdyn} both the coupling matrix $K= L_1L_1^*$ and mass matrix $M = L_2L_2^*$ are assumed to be strictly positive matrices and the friction matrix $F$ is a positive semidefinite matrix with the property that the form $x \mapsto \langle x,F x \rangle$ is non-degenerate on every eigenspace of $M^{-1}K.$

Introducing new coordinates $y(t) :=(L_1^*x(t),L_2^*\dot{x}(t))$, we can define a generator
\[A = \left(\begin{matrix} 0 & L_1^* L_2^{-1*} \\ -L_2^{-1} L_1 & -L_2^{-1} F L_2^{-1*} \end{matrix} \right) \]
such that the semigroup defined as $T(t):=e^{tA}$, associated with \eqref{eq:Hamdyn}, is exponentially stable. Thus, the solution to \eqref{eq:Hamdyn} is given by $y(t)= T(t) y_0.$
To model particle motion in contact with a heat bath the following generalized Langevin equation (GLE) has been proposed
\begin{equation}
\label{eq:GLE}
M \ddot{x}(t) + F \dot{x}(t)  + K x(t) =- \int_{0}^t \gamma(t-s)\Gamma_0 \dot{x}(s) \ ds-B_0u(t), 
\end{equation}
where $B_0u(t)$ describes an external fluctuation force. To include a \emph{memory effect} in the dissipation the friction is perturbed by a convolution between a kernel $\gamma \Gamma_0$ and the velocity of the particles, where $\Gamma_0$ is a matrix representing the typical scale of friction. 
In order to cast \eqref{eq:GLE} in a form that resembles more the type of delay equations we have been studying in this article, we use matrices $N:= \operatorname{diag} \left(0, \Gamma_0 \right)$ and $B:= \operatorname{diag} \left(0, B_0 \right),$
such that the dynamics \eqref{eq:GLE} takes the form
\begin{equation}
\begin{split}
\label{eq:GLE1}
y'(t) &= A y(t) + \int_0^t  \gamma(t-s) Ny(s)\ ds + Bu(t).
\end{split}
 \end{equation}
We then consider fractional Brownian motion (fBm) $B_H$ with Hurst parameter $H \in (0,1)$ and correlation $\mathbb E(B_t^H B_s^H) = \frac{1}{2} \left( s^{2H}+t^{2H}-\vert t-s \vert^{2H} \right).$
Moreover, for $t>s=0$ the distributional time derivative of fBm has correlation coefficients $\mathbb E(\dot{B}_t^H \dot{B}_0^H) = H(2H-1)t^{2(H-1)}$ which tend to zero for large $t$. For Hurst parameters $H \in (1/2,1)$ the correlation coefficient is also integrable at zero. We then use a cut-off function to truncate the small tail (neglecting memory effects from time more than a distance $r$ away from the current time) and define the history kernel in \eqref{eq:GLE1} to be $\gamma(t) :=\indic_{[r,0]}(t) t^{2(H-1)}.$ Thus, 
\begin{equation}
\begin{split}
y'(t) &= A y(t) + \int_{t-r}^t  \gamma(t-s) Ny(s)\ ds + Bu(t)\\
&\approx A y(t) +  r\gamma(r/2) Ny(t-r/2) + Bu(t).
\end{split}
\end{equation}
Now, the high-dimensional system \eqref{eq:GLE1} can be reduced using balanced truncation as described in this article. For a numerical illustration, let us choose $d=100$ $M = \indic + 0.1\operatorname{diag}(a_1, \dots, a_d), F = \indic + 0.1\operatorname{diag}(a_1, \dots, a_d), K = \indic + \left(|a_{ij}/2|\right)_{i,j=1}^{d}, \Gamma_0 = \indic$, where $a_i, a_{ij} \sim \mathcal{N}(0, 1)$ are chosen i.i.d. for all $i, j \in \{1, \dots, d\}$. We further take $u(t) = \sin(20 t) \mathbf{1}, C = \indic, T = 10, \tau = 0.1$ and plot components of the full and reduced systems in Figure \ref{fig: Langevin}, again considering different dimensions $r$ for the reduced models. In spite of a dimension reduction from $d = 100$ to $r = 10$ the trajectories look almost the same in the right plot.

\begin{figure}
\centering
\includegraphics[width=1.0\linewidth]{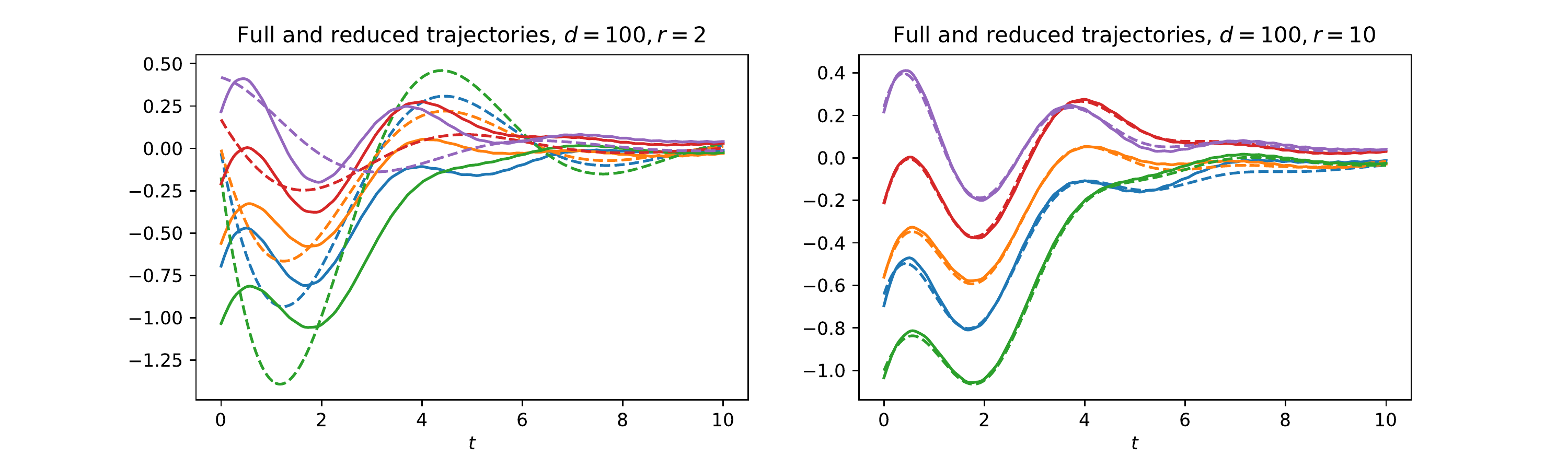}
\caption{Components of the full (solid lines) and reduced Langevin dynamics with different dimensions $r$ of the reduced systems (dashed lines).}
\label{fig: Langevin}
\end{figure}

\smallsection{Example: \ Geometric Brownian motion}
We finally consider geometric Brownian motion in dimension $d = 40$ as an example for a stochatsic delay equation just as in \eqref{eq:delay21} with a one-dimensional Brownian motion and take $\xi = (0.1, \dots, 0.1)^\top, A = -\indic + \left(a_{ij}\right)_{i,j=1}^{d}, B = \indic + \left(a_{ij}\right)_{i,j=1}^{d}, N = \indic + \left(a_{ij}\right)_{i,j=1}^{d}$, where $a_{ij} \sim \mathcal{N}(0, 10^{-4})$ is sampled i.i.d. for all $i, j \in \{1, \dots, d\}$ once at the beginning of the simulation, and $C = \operatorname{diag}(\underbrace{1, \dots, 1}_{r\,\text{times}}, \underbrace{0.01, \dots, 0.01}_{d - r \, \text{times}})$ with $r=10$. The delay time is $\tau = 0.1$, the control $u(t) = \sin(20 t) \mathbf{1}$ and the history function $f_t = 0$. Figure \ref{fig: geometric brownian motion} shows the $L^2$ errors between the full and reduced systems and compares them with the bound from Theorem \ref{theo:sdde}. We see that the error decreases if we choose $r$ big enough and that the bound is off by around two orders of magnitude.

\begin{figure}
\centering
\includegraphics[width=1.0\linewidth]{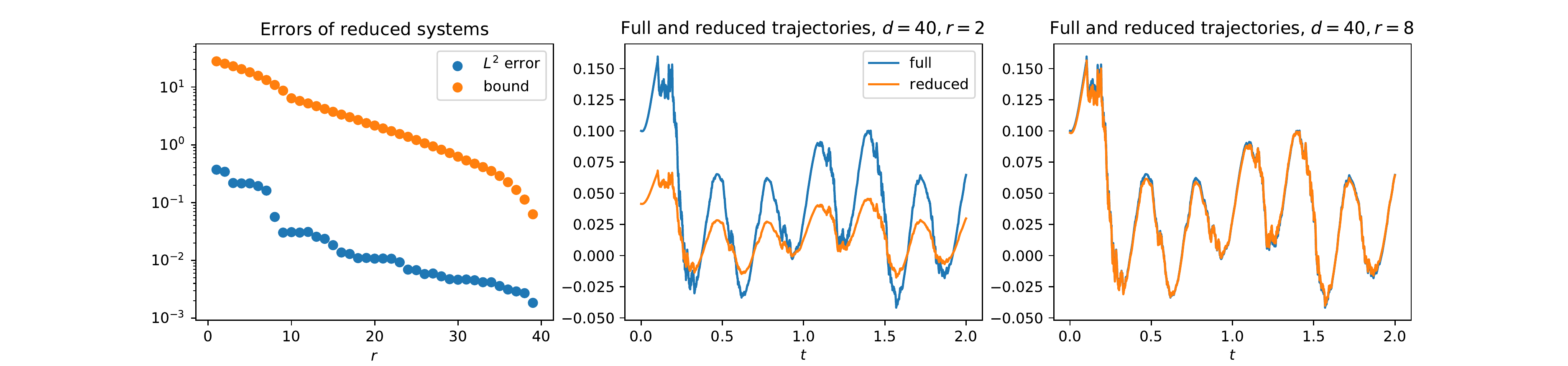}
\caption{Geometric Brownian motion with delay. Left panel: $L^2$ error and bounds for the full and reduced systems with varying dimension $r$. Right panel: one component of a trajectory of the full compared to the $r$-dimensional reduced model for different values of $r$.}
\label{fig: geometric brownian motion}
\end{figure}

\smallsection{Acknowledgements} 
This work was supported by the EPSRC grant EP/L016516/1 for the University of Cambridge CDT, the CCA (S.B.) and by Deutsche Forschungsgemeinschaft (DFG) through the grant CRC 1114 `Scaling Cascades in Complex Systems' (A05, project number 235221301). 


%


\end{document}